\def\G{G}
\def\C{\mathbb C}
\DeclareMathOperator{\trace}{trace}
\def\d{\mathrm d}
\DeclareMathOperator{\rank}{rank}
\def\Gh{{\widehat{\G}}}
\def\SU2{{{\rm SU}(2)}}
\def\R{{\mathbb R}}
\def\D{{\mathbb D}}
\def\N{{\mathbb N}}
\def\Z{{\mathbb Z}}
\def\C{{\mathbb C}}
\def\S3{{{\mathbb S}^3}}
\def\Rn{{{\mathbb R}^n}}
\def\Tn{{{\mathbb T}^n}}
\def\Zn{{{\mathbb Z}^n}}
\def\p#1{{\left({#1}\right)}}
\def\jp#1{{\left\langle{#1}\right\rangle}}
\DeclareMathOperator{\diff}{diff}
\DeclareMathOperator{\sign}{sign}
\DeclareMathOperator{\op}{Op}
\def\d{\mathrm d}
\def\e{\mathrm e}
\def\i{\mathrm i}
\let\Re\relax
\DeclareMathOperator\Re{Re}
\def\rhodiff{\mbox{$\triangle\!\!\!\!\ast\,$}}
\def\HS{{\mathtt{HS}}}
\newtheorem{thm}{Theorem}[section]
\newtheorem{lem}[thm]{Lemma}
\newtheorem{cor}[thm]{Corollary}
\newtheorem*{crit}{Criterion}
\theoremstyle{remark}
\newtheorem{rem}[thm]{Remark}
\newtheorem{expl}[thm]{Example}
\numberwithin{equation}{section}
\begin{document}
\title[Multipliers on compact Lie groups]
{$L^p$ Fourier multipliers on compact Lie groups}
\author[Michael Ruzhansky]{Michael Ruzhansky}
\address{
  Michael Ruzhansky:
  \endgraf
  Department of Mathematics
  \endgraf
  Imperial College London
  \endgraf
  180 Queen's Gate, London SW7 2AZ 
  \endgraf
  United Kingdom
  \endgraf
  {\it E-mail address} {\rm m.ruzhansky@imperial.ac.uk}
  }
\author[Jens Wirth]{Jens Wirth}
\address{
  Jens Wirth:
  \endgraf
  Institut f\"ur Analysis, Dynamik und Modellierung
  \endgraf
  Universit\"at Stuttgart
  \endgraf
  Pfaffenwaldring 57, 70569 Stuttgart
  \endgraf
  Germany
  \endgraf
  {\it E-mail address} {\rm jens.wirth@iadm.uni-stuttgart.de}
  \endgraf
  }
\thanks{The first
 author was supported by the EPSRC
 Leadership Fellowship EP/G007233/1. The second author
 was supported by the EPSRC grant EP/E062873/1 and by 
  DAAD for visits to Imperial College London
  in December 2010 and February 2011.
 }
 
\date{\today}

\subjclass[2010]{Primary 43A22; 43A77; Secondary 43A15; 22E30;}
\keywords{Multipliers, compact Lie groups, 
pseudo-differential operators} 
 
\begin{abstract}
In this paper we prove $L^p$ Fourier multiplier theorems for
invariant and also non-invariant operators on compact Lie groups
in the spirit of the well-known H\"ormander-Mikhlin theorem 
on $\mathbb R^n$ and its variants on tori $\mathbb T^n$. 
We also give applications to a-priori estimates for non-hypoelliptic
operators. Already in the case of tori we get an interesting refinement
of the classical multiplier theorem.
\end{abstract}
\maketitle
 
\section{Introduction}

In this paper we prove $L^p$ multiplier theorems for
invariant and then also 
for non-invariant operators on compact Lie groups.
We are primarily interested in Fourier multipliers rather than in spectral multipliers.

The topic has been attracting intensive research for a long time.
There is extensive literature providing criteria for central
multipliers, see e.g. N. Weiss  \cite{W72},
Coifman and G. Weiss \cite{CW70}, 
Stein \cite{Stein70}, Cowling \cite{Cowling}, 
Alexopoulos \cite{Alex}, to mention only very few. 
There are also results for functions of the sub-Laplacian,
for example on $\SU2$, see Cowling and Sikora \cite{CS}.

The topic of the $L^p$-bounded multipliers
has been extensively researched on symmetric
spaces of noncompact type for multipliers corresponding
to convolutions with distributions which are
bi-invariant with respect to the subgroup, see e.g. 
Anker \cite{Anker} and references therein.
However, general results on compact Lie groups are
surprisingly elusive. 
For the case of the group 
$\SU2$ a characterisation for operators
leading to Calderon--Zygmund kernels in terms of certain symbols
 was given by 
 Coifman and G. Weiss in \cite{CW70} based on a criterion
 for Calderon--Zygmund 
operators from \cite{CdG70} (see also \cite{CWbook}). 
The proofs and formulations, however, rely on explicit formulae for
representations and for the Clebsch--Gordan 
coefficients available
on $\SU2$ and are not extendable to other groups.
In general, in the case when we do not deal with functions
of a fixed operator, it is even unclear in which
terms to formulate criteria for the $L^p$-boundedness.

In this paper we prove a general result 
for arbitrary compact Lie groups $G$. 
This becomes possible based on the tools 
initiated and developed by the first author and V. Turunen in 
\cite{RT-su2} and \cite{RTbook}, in particular the 
development of
the matrix valued symbols and the corresponding 
quantization relating operators and their symbolic
calculus with
the representation theory of the group.
In view of the results in \cite{RT-su2, RTW10}, 
pseudo-differential operators in H\"ormander classes 
$\Psi^m(\G)$ can
be characterised in terms of decay conditions imposed
on the matrix valued symbols using natural difference
operators acting on the unitary dual $\Gh$. 
From this point of view Theorem \ref{thm:main1}
provides a Mikhlin type multiplier theorem which
reduces the assumptions on the symbol ensuring the
$L^p$-boundedness of the operator.
In Theorem \ref{thm:main2} we give a refinement of this
describing precisely the difference operators that can
be used for making assumptions on the symbol. For example,
if $G$ is semi-simple, only those associated to the
root system suffice, which appears natural in the context.

We give several applications of the obtained result.
Thus, in Corollary \ref{cor2} we give a criterion for
the $L^p$-boundedness for a class of operators with 
symbols in the class 
$\mathscr S^0_\rho(\G)$ of type $\rho\in[0,1]$.
Such operators appear e.g. with $\rho=\frac12$ 
as parametrices for the sub-Laplacian
or for the ``heat'' operator,
see Example \ref{EX:subL}, or with $\rho=0$
for inverses of operators
$X+c$, with $X\in {\mathfrak g}$ and $c\in\C$,
see Corollary \ref{COR:vfs} on general $G$ and
Example \ref{EX:vfs} on $\SU2$ and $\S3$. We note that
although operators $X+c$ are not locally hypoelliptic,
we still get a-priori $L^p$-estimates for them as a consequence of
our result. 

We illustrate Theorem \ref{thm:main2} in 
Remark \ref{REM:tori} in
the special case
of the tori $\Tn$. In different versions of 
multiplier theorems on $\Tn$, one usually
expects to impose conditions on differences of order
$[\frac{n}{2}]+1$ applied to the symbol.
In Remark \ref{REM:tori} we show that e.g. on
${\mathbb T}^2$ or ${\mathbb T}^3$, it is
enough to make an assumption on only one second
order difference of a special form applied to
the symbol. In particular, this improves by now classical
theorems on $L^p$-multipliers requiring $n$ differences, see e.g.
Nikolskii \cite[Section 1.5.3]{N}.

In Theorem \ref{thm:main3} we give an application to 
the $L^p$-estimates for
general
operators from $C^\infty(\G)$ to ${\mathcal D}'(\G)$, not
necessarily invariant. This result is also a relaxation
of the symbolic assumptions on the operator compared to 
those in the pseudo-differential classes. In Theorem \ref{thm:main3} 
we give a condition for symbols based on the $(1,0)$-type behaviour.
Since the number of imposed conditions is finite, it can be extended
further to $(\rho,\delta)$-type conditions similarly to the case of 
multipliers in Section \ref{SEC:general}. In general, symbol classes
of type $(\rho,\delta)$ for matrix symbols on compact Lie
groups were introduced in \cite{RTW10}. These symbols also
satisfy a suitable version of the functional calculus, see
the authors' paper \cite{RW14}.

In \cite{ANR}, Fourier multiplier theorems have been recently obtained for operators to be bounded from
$L^p$ to $L^q$ for $1<p\leq 2\leq q<\infty$ in the setting of the compact Lie group
SU(2). However, those results are different in nature as they explore only the decay
rate of symbols rather than the much more subtle behaviour expressed in terms of 
difference operators in this paper.

The paper is organised as follows. In Section \ref{SEC:Mults}
we formulate the results with several application and
give a number of examples. In Section \ref{SEC:proofs}
we introduce the necessary techniques and
prove the results. In Section \ref{SEC:central} we
briefly discuss central multipliers and the meaning of 
the difference operator $\rhodiff$ in this case.
Finally, in Section \ref{SEC:general} we prove
corollaries for operators with symbols in $\mathscr S^0_\rho(\G)$
and for non-invariant operators.

Some of the results of this paper have been announced in \cite{RW13} without proof. 

\section{Multiplier theorems on compact Lie groups}
\label{SEC:Mults}

Let $\G$ be a compact Lie group with identity $1$ and the 
unitary dual $\Gh$.
The following considerations are based on the group Fourier transform
\begin{equation}\label{eq:FI}
{\mathscr F}\phi = \widehat \phi(\xi) = 
\int_\G \phi(g) \xi(g)^* \d g,\qquad
  \phi(g) = \sum_{[\xi]\in\widehat\G} d_\xi 
  \trace(\xi(g) \widehat \phi(\xi) ) = 
  {\mathscr F}^{-1} [\widehat\phi]  
\end{equation}
defined in terms of equivalence classes 
$[\xi]$ of irreducible unitary 
representations $\xi : \G \to \mathrm U(d_\xi)$ 
of dimension (degree) $d_\xi$. 
The Peter--Weyl theorem on $\G$ implies in 
particular that this pair of 
transforms is inverse to each other and that the Plancherel identity
\begin{equation}\label{eq:Plancherel}
   \|\phi\|_2^2 = \sum_{[\xi]\in\Gh} d_\xi \|\widehat\phi(\xi)\|_{\HS}^2
   =: \|\widehat{\phi}\|_{\ell^2(\Gh)}^2
\end{equation}
holds true for all $\phi\in L^2(\G)$. Here $$\|\widehat
\phi(\xi)\|_{\HS}^2 = \trace(\widehat \phi(\xi)\widehat\phi(\xi)^*)$$
denotes the Hilbert--Schmidt (Frobenius) norm of matrices. 
The Fourier inversion 
statement \eqref{eq:FI} is valid for all 
$\phi\in\mathcal D'(\G)$ and
the Fourier series converges in $C^\infty(\G)$ provided $\phi$ is
smooth. It is further convenient to denote
\begin{equation*}
  \langle\xi\rangle = \max\{1, \lambda_\xi\},
\end{equation*}
where $\lambda_\xi^2$ is the eigenvalue of the Casimir element 
(positive 
Laplace-Beltrami operator) acting on the matrix coefficients 
associated to the representation $\xi$. The Sobolev spaces 
can be characterised by Fourier coefficients as
\begin{equation*}
 \phi\in H^s(\G) \quad\Longleftrightarrow \quad 
 \langle\xi\rangle^s \widehat\phi(\xi) \in \ell^2(\widehat\G), 
\end{equation*}
where $\ell^2(\widehat\G)$ is defined as the space of matrix-valued
sequences such that the sum on the right-hand side of 
\eqref{eq:Plancherel} is finite.

For an arbitrary continuous 
linear operator $A:C^\infty(\G) \to \mathcal D'(\G)$ we denote its
Schwartz kernel as $K_A\in \mathcal D'(\G\times\G)$ 
and by a change of
variables we associate the right-convolution kernel
 $$R_A(g_1,g_2) = K_A(g_1, g_1^{-1}g_2).$$ 
 Thus, at least formally, we write
\begin{equation*}
   A \phi(g_1) = \int_\G K_A(g_1,g_2)\phi(g_2)\d g_2 = 
   \int_\G \phi(g_2) R_A(g_1, g_2^{-1}g_1) \d g_2 = \phi * R_A(g_1,\cdot).
\end{equation*}
Following the analysis
in \cite{RTbook} we denote the partial 
Fourier transform of the right-convolution kernel with respect 
to the second variable as symbol of the operator, 
\begin{equation}\label{EQ:RS}
   \sigma_A(g, \xi)  := \widehat R_A(g,\xi) = 
   \int_\G R_A(g,g') \xi(g')^*\d g' \in 
   \mathcal D'(\G)\widehat\otimes_\pi \Sigma(\widehat\G),
\end{equation}
which is a distribution taking values in the set of moderate 
sequences of matrices
\begin{equation*}
   \Sigma(\widehat\G) = \{ \sigma : \xi \mapsto \sigma(\xi) 
   \in \mathbb C^{d_\xi\times d_\xi} : 
   \| \sigma(\xi) \|_{\rm op} \lesssim \langle 
   \xi\rangle^N \text{ for some $N$}\}.
\end{equation*}

Here we are concerned with left-invariant operators, which 
means that $A\circ T_g = T_g \circ A$ 
for all the left-translations $T_g :
\phi\mapsto \phi(g^{-1} \cdot)$. 
This implies that the kernel $K_A$ satisfies the invariance
 $$K_A(g_1,g_2) = K_A(g^{-1}g_1,g^{-1}g_2)$$ for all $g\in\G$ and hence
 $R_A$ is independent of the first argument. 
 In consequence, also 
the symbol is independent of the first argument and we will 
write $\sigma_A(\xi)$ for it. 
In combination with Fourier inversion formula 
\eqref{eq:FI} this means that the operator
$A$ can be written as
\begin{equation}\label{EQ:inv-symbol}
  A\phi(g) = \sum_{[\xi]\in\widehat\G} 
  d_\xi \trace(\xi(g) \sigma_A(\xi) \widehat \phi(\xi) ).
\end{equation}
By this formula we can assign operators $A = \op(\sigma_A)$ to arbitrary sequences $\sigma_A\in\Sigma(\widehat G)$.
It follows\footnote{In fact, \eqref{EQ:inv-symbol0} can be 
taken as a definition
of the symbol $\sigma_A$ of $A$, from which \eqref{EQ:RS} 
and \eqref{EQ:inv-symbol} follow; see also
Section \ref{SEC:noninv}.}
that
\begin{equation}\label{EQ:inv-symbol0}
  \sigma_A(\xi) = \xi(g)^* (A\xi)(g) = (A\xi)(g)\big|_{g=1}
\end{equation}
is independent of $g$.
We refer to operators of this form as 
noncommutative Fourier multipliers.
The Plancherel identity \eqref{eq:Plancherel} 
implies that the operator $A$ is bounded 
on $L^2(\G)$ if and only if 
$\sigma_A\in\ell^\infty(\widehat\G)$, where
\begin{equation*}
   \ell^\infty(\widehat\G) =\{ \sigma_A\in \Sigma(\widehat\G) : 
   \sup_{[\xi]\in\Gh} \|\sigma_A(\xi)\|_{\rm op} < \infty \},
\end{equation*}
and $\|\cdot\|_{\rm op}$ is the operator norm
on the unitary space $\mathbb C^{d_\xi}$.
Note that there is also another version of the space $\ell^\infty(\widehat\G)$
which is realised as the weighted sequence space over Hilbert-Schmidt
norms, we refer to \cite[Section 10.3.3]{RTbook} for its properties.

We now define difference operators 
$Q\in\diff^\ell(\widehat\G)$ acting on 
sequences $\sigma \in\Sigma(\widehat\G)$ 
in terms of corresponding functions $q\in C^\infty(\G)$, which 
vanish to (at least) $\ell^{\rm th}$ order in the identity 
element $1\in\G$, and 
their interrelation with the group Fourier transform given by 
\begin{equation}
   Q\sigma  = \mathscr F \p{q(g) {\mathscr F}^{-1}\sigma}.
\end{equation}
Note, that $\sigma\in\Sigma(\widehat\G)$ implies 
${\mathscr F}^{-1}\sigma\in\mathcal D'(\G)$ and therefore the 
multiplication with a smooth function is well-defined. 
The main idea of introducing such operators is that
applying differences to symbols of Calderon--Zygmund
operators brings an improvement in the behaviour
of $\op (Q\sigma)$ since we multiply the integral kernel of
$\op (\sigma)$ by a function vanishing on its singular
set. Different collections of difference operators
have been explored in \cite{RTW10} in the 
pseudo-differential setting.

Difference operators of particular interest arise from 
matrix-coefficients of representations. 
For a fixed irreducible representation $\xi_0$ we 
define the (matrix-valued) difference 
${}_{\xi_0}\mathbb D=
({}_{\xi_0}\mathbb D_{ij})_{i,j=1,\ldots,d_{\xi_0}}$ 
corresponding to the matrix elements of 
$\xi_0(g)-\mathrm I$,  i.e. with
$$q_{ij}(g)=\xi_0(g)_{ij}-\delta_{ij},$$
$\delta_{ij}$ the Kronecker delta.
If the representation is fixed, 
we omit the index $\xi_0$. For a sequence of 
difference operators of this type,
$$\D_1={}_{\xi_1}\D_{i_1 j_1},
\D_2={}_{\xi_2}\D_{i_2 j_2}, \ldots,
\D_k={}_{\xi_k}\D_{i_k j_k},$$ 
with $[\xi_m]\in\Gh$, $1\leq i_m,j_m\leq d_{\xi_m}$,
$1\leq m\leq k$,
we define
$$\D^\alpha:=\D_1^{\alpha_1}\cdots \D_k^{\alpha_k}.$$

Among other things, it follows from 
\cite{RTW10} that 
an invariant operator $A$ belongs to the
usual H\"ormander class of pseudo-differential operators
$\Psi^0(\G)$ defined by
localisations if and only if its matrix symbol satisfies 
\begin{equation}\label{eq:RTW-inv}
   \|{\mathbb D}^{\alpha} \sigma_A(\xi) \|_{\rm op} 
   \le C_\alpha \langle\xi\rangle^{-|\alpha|}
\end{equation}
for all multi-indices $\alpha$ and for all
$[\xi]\in\Gh$. From this point of view the following
condition \eqref{eq:HM-cond} is a natural
relaxation from the $L^p$-boundedness of
zero order pseudo-differential operators to a 
multiplier theorem.

\begin{thm}\label{thm:main1} 
Denote by $\varkappa$ be the smallest even 
integer 
larger than $\frac 12\dim\G$. 
Let $A: C^\infty(\G) \to \mathcal D'(\G)$ be left-invariant.  
Assume that its symbol $\sigma_A$ 
satisfies
\begin{equation}\label{eq:HM-cond}
   \|{\mathbb D}^{\alpha} \sigma_A(\xi) \|_{\rm op} 
   \le C_\alpha \langle\xi\rangle^{-|\alpha|}
\end{equation}
for all multi-indices $\alpha$ with $|\alpha|\le \varkappa$,
and for all $[\xi]\in\Gh$. 
 Then the operator $A$ is 
of weak type $(1,1)$ and $L^p$-bounded for all $1<p<\infty$.
\end{thm}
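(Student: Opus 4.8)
The plan is to prove this via the standard Calderón--Zygmund machinery for singular integrals on spaces of homogeneous type, treating $\G$ as such a space with its bi-invariant Riemannian distance and Haar measure. Since the operator $A$ is bounded on $L^2(\G)$ (by the Plancherel identity and the hypothesis with $\alpha=0$, which gives $\sigma_A\in\ell^\infty(\widehat\G)$), it suffices to establish a Hörmander-type integral condition on the right-convolution kernel $R_A$, namely the estimate
\begin{equation*}
 \int_{d(g,1)\ge 2\,d(h,1)} \abs{R_A(h^{-1}g)-R_A(g)}\,\d g \le C
\end{equation*}
uniformly in $h$; once this holds, the Calderón--Zygmund theorem yields weak type $(1,1)$, and interpolation with $L^2$ together with duality gives $L^p$-boundedness for all $1<p<\infty$.

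The heart of the matter is therefore a quantitative kernel estimate, and the difference operators $\mathbb D^\alpha$ are precisely the tool for obtaining it. The key mechanism is that $\mathscr F^{-1}(\mathbb D^\alpha \sigma_A) = (\text{product of factors } \xi_0(g)_{ij}-\delta_{ij})\cdot R_A(g)$, so that each difference applied to the symbol multiplies the kernel by a smooth function vanishing at the identity; summing appropriately chosen products of such factors reproduces, near $1$, a controllable power of the distance $d(g,1)$. First I would fix a finite generating set of representations whose matrix coefficients separate points and locally behave like coordinates near $1$, so that a suitable combination $\sum_{|\alpha|=j} c_\alpha(g)\,\mathbb D^\alpha$ corresponds to multiplication of $R_A$ by a function comparable to $d(g,1)^{j}$ near the identity. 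The decay hypothesis $\n{\mathbb D^\alpha\sigma_A(\xi)}_{op}\le C_\alpha \jp{\xi}^{-|\alpha|}$ then translates, through the Plancherel identity on $\ell^2(\widehat\G)$ and the Sobolev embedding $H^s(\G)\hookrightarrow L^1$ type estimates for $s>\frac12\dim\G$, into weighted $L^2$ and hence $L^1$ bounds on $R_A$ away from $1$. Concretely, I expect to show
\begin{equation*}
 \int_{d(g,1)\ge r} \abs{R_A(g)}\,\d g \lesssim r^{-1}
 \quad\text{and}\quad
 \int_{d(g,1)\ge r} \abs{\nabla R_A(g)}\,\d g \lesssim r^{-2},
\end{equation*}
the latter controlling the difference $R_A(h^{-1}g)-R_A(g)$ by the mean value theorem when $d(g,1)\ge 2d(h,1)$.

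The reason $\varkappa$ is the smallest \emph{even} integer exceeding $\frac12\dim\G$ is that the argument passes through $L^2$ estimates on dyadic annuli: on the annulus $d(g,1)\sim 2^{-k}$ one uses Cauchy--Schwarz, pays a factor of the measure $\sim 2^{-k\dim\G/2}$, and must absorb it against the weighted Plancherel norm of $\jp{\xi}^{-|\alpha|}\sigma_A$, which requires $|\alpha|$ to exceed $\frac12\dim\G$; evenness lets one realise the weight $\jp{\xi}^{-|\alpha|}$ cleanly through the Laplace--Beltrami operator (an even-order differential operator) applied to the kernel, so that the multiplied-by-$d(g,1)^{|\alpha|}$ factor pairs exactly with an integer power of $\Lap$. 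The main obstacle, and where the real work lies, is the first step: making rigorous the dictionary between the combinatorial differences $\mathbb D^\alpha$ and honest powers of the distance function near the identity, i.e.\ showing that one can manufacture from the matrix coefficients $\xi_0(g)_{ij}-\delta_{ij}$ a function behaving like $d(g,1)^{\varkappa}$ uniformly, and simultaneously controlling the off-diagonal region without a single fixed coordinate chart. This is exactly the point at which the group structure and the representation-theoretic meaning of $\mathbb D^\alpha$ must be exploited, and I would expect to invoke the characterisation of Sobolev spaces via $\jp{\xi}$ together with the summation-by-parts / Leibniz properties of the difference operators $\mathbb D$ established in \cite{RTW10} to close the estimate.
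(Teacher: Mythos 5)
Your reduction to the H\"ormander integral condition on a space of homogeneous type is a legitimate strategy in principle, but the proposal has a genuine gap at exactly the step you call ``the heart of the matter,'' and it is not the step you identify. The hypothesis \eqref{eq:HM-cond} controls $\mathbb D^\alpha\sigma_A$, and on the kernel side a difference operator acts by \emph{multiplication}: $\mathscr F^{-1}(\mathbb D^\alpha\sigma_A)=q^{(\alpha)}R_A$ with $q^{(\alpha)}$ vanishing at the identity. So the assumptions yield only weighted estimates on $R_A$ itself (via Plancherel, e.g.\ $\rho^\varkappa R_A\in L^2(\G)$, and a small amount of Sobolev regularity of the \emph{weighted} kernel), but they give no control whatsoever on $\nabla R_A$: differences on the symbol side never differentiate the kernel. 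Concretely, on $\SU2$ one has $n=3$, $\varkappa=2$, and \eqref{eq:HM-cond} gives only $\rho^2R_A\in H^{s}(\G)$ for $s<\tfrac12$, so not even one derivative of the kernel is controlled in any sense; the mean value theorem step, which is the only mechanism you offer for estimating $R_A(h^{-1}g)-R_A(g)$, is therefore unsupported. In the classical Euclidean proof this regularity is not extracted from the symbol conditions alone but from a Littlewood--Paley decomposition: on a frequency-localised piece $K_j$ a gradient costs a factor $2^j$ (Bernstein), and the H\"ormander condition is assembled from the dyadic pieces. Your outline contains no scale decomposition at all --- neither dyadic spectral cutoffs $\chi(2^{-j}\sqrt{-\Delta})$ nor any substitute --- and without one the argument cannot close. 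This is precisely the device the paper's proof supplies, by a different route: instead of estimating the kernel, it tests $A$ against the smooth mollifier differences $\psi_r$ built from the pseudo-distance $\rho$ and verifies the Coifman--de Guzman criterion $\int_\G \abs{A\psi_r}^2\rho^{n(1+\epsilon)}\,\d g\lesssim r^{\epsilon}$; this needs only weighted $\ell^2(\Gh)$ bounds on $\rhodiff^m(\sigma_A\widehat\psi_r)$, which follow from Plancherel, the finite Leibniz rule for differences, and Sobolev estimates on $q\psi_r$ --- no pointwise or gradient bounds on $R_A$ are ever required.

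Two further concrete problems. First, even granting your displayed kernel estimates, the arithmetic does not close: with $\int_{d(g,1)\ge r}\abs{\nabla R_A(g)}\,\d g\lesssim r^{-2}$, the mean value theorem bounds the H\"ormander integral by $C\,d(h,1)\cdot d(h,1)^{-2}=C\,d(h,1)^{-1}$, which blows up as $h\to1$; what you need is an $O(r^{-1})$ bound for the gradient integral (on $\Rn$ this corresponds to $\abs{\nabla K(x)}\lesssim\abs{x}^{-n-1}$). Second, your explanation of the evenness of $\varkappa$ is not what forces it: in the paper it arises because the natural difference operator $\rhodiff$, associated with $\rho^2$ (which vanishes to second order at $1$), is of second order and is iterated $m$ times, so only even total orders $\varkappa=2m$ with $\tfrac n2<2m\le 2+\tfrac n2$ are compatible with the Coifman--de Guzman exponent $n(1+\epsilon)=4m$ and with the range of validity of the mollifier estimate of Lemma \ref{lem:psi-est}.
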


\begin{rem}
a) The assumptions given in the theorem can be relaxed. 
For the top order difference we need only one particular difference
operator. 
Moreover, for the lower order
difference operators we only need differences associated
to the root system if $\G$ is semi-simple, and to an
extended root system for a general compact Lie group.
Such a refinement will be given in Theorem \ref{thm:main2} once we
introduced the necessary notation.
\\
b) Additional symmetry conditions for the 
operator imply simplifications. Later on we will show how the 
assumptions can be weakened for central multipliers.
\\
c) We have to round up the number of difference conditions to
even integers. This seems to be for purely technical reasons, but was
already observed similarly in \cite{W72} for central multipliers.
\\
d) The conditions are needed for the weak type $(1,1)$ property. 
Interpolation allows to reduce assumptions on the number of 
differences for $L^p$-boundedness. 
\end{rem}

Before proceeding to the proof of the theorem, we will mention some 
applications. As first example let us consider the known case of the
Riesz transform.

\begin{expl}\label{expl1}
Let us consider the partial Riesz transform 
$$\mathcal R_Z = (-\Delta)^{-1/2}\circ Z$$ associated 
to a left-invariant vector-field
$Z\in\mathfrak g$ on a Lie group $\G$. 
For simplicity we assume that $Z$ is 
normalised with respect to the Killing form on $\mathfrak g$. The 
Riesz transform is a left-invariant operator acting on $L^2(\G)$ with 
symbol 
$$
  \sigma_{\mathcal R_Z}(\xi) = (\lambda_\xi)^{-\frac12} \sigma_Z(\xi),
$$
$\sigma_Z(\xi)=(Z \xi)(1)$ the symbol 
of the left-invariant vector field, and by 
definition of the Laplacian as sum of squares we have
$$\|\sigma_{\mathcal R_Z}(\xi)\|_{\rm op}\le 1.$$ 
Note here,
that $\lambda_\xi=0$ implies that $\xi=0$ is the trivial 
representation and therefore also 
$\sigma_Z(\xi)=0$ as vector fields 
annihilate constants.
It follows from Corollary \ref{cor1} 
that this operator extends to a bounded
operator on all $L^p(\G)$, $1<p<\infty$ and is of weak type $(1,1)$,
recovering the well-known result in \cite{Stein70}.
\end{expl}

\begin{rem}
In  \cite[p. 58]{Stein70}, E.M. Stein asked whether the Riesz transform
${\mathcal R}_Z$ as well as the Riesz potentials $(-\Delta)^{i\gamma}$
($\gamma$ real) are pseudo-differential operators on $\G$. 
This is in fact true on all
closed Riemannian manifolds. 
Indeed, if $p_0$ denotes the projection to
the zero eigenspace of $-\Delta$, then we have the identity
$$(-\Delta)^z=(-\Delta+p_0)^z-p_0$$ for all complex $z$. The operator
$(-\Delta+p_0)^z$ is pseudo-differential for $\Re z<-1$ by 
\cite{Seeley} and $p_0$ is
smoothing, implying that $(-\Delta)^z$ are pseudo-differential 
of order $\Re z/2$. By calculus this extends to all $z\in\C$.
In particular, the $L^p$ boundedness in
Example \ref{expl1} also follows.
\end{rem} 

\begin{expl} Let $\rho\in[0,1]$.
We denote by $\mathscr S^0_{\rho}(\G)$ the set of all $\sigma_A\in\Sigma(\widehat G)$
satisfying symbol estimates of type $\rho$
\begin{equation*} 
    \|\mathbb D^\alpha \sigma_A(\xi)\|_{\rm op} 
    \le C_\alpha \langle\xi\rangle^{-\rho|\alpha|}
\end{equation*}
for  all multi-indices $\alpha$. Let $A=\op(\sigma_A)$ be the associated operator to such a symbol.
Then $A$ defines a bounded operator mapping
 $W^{p,r}(\G)\to L^p(\G)$ for 
 $$r\ge \varkappa (1-\rho) \left| \frac 1p-\frac12\right|,$$ 
 $\varkappa$ 
 as in Theorem~\ref{thm:main1} and $1<p<\infty$. 
See Corollary \ref{cor2}, where we give a refined
version of this.
\end{expl}

\begin{expl}\label{EX:subL}
The previous example applies in particular to the 
parametrices constructed in \cite{RTW10}. Following the notation from that paper,  we consider the
sub-Laplacian $$\mathcal L_s = \mathrm D_1^2+\mathrm D_2^2$$ on 
$\mathbb S^3$. It was shown that  it has a parametrix from 
$\op\mathscr S^{-1}_{1/2}({\mathbb S^3})$
and therefore $\mathcal L_s u \in L^p(\mathbb S^3)$ 
implies regularity for $u$. More precisely, the sub-elliptic estimate
\begin{equation}\label{eq:sub-ell-Ls}
\|u\|_{W^{p, 1-|\frac1p-\frac12|}(\mathbb S^3)}\leq 
C_p\|\mathcal L_s u\|_{L^p(\mathbb S^3)}
\end{equation}
holds true for all $1<p<\infty$.

Similarly, the ``heat'' operator
$$H = \mathrm D_3-\mathrm D_1^2-\mathrm D_2^2$$ on 
$\mathbb S^3$ has a parametrix from 
$\op\mathscr S^{-1}_{1/2}({\mathbb S^3})$.
Consequently, we also get the sub-elliptic estimate
\eqref{eq:sub-ell-Ls} with $H$ instead of $\mathcal L_s$.
\end{expl}

Similar examples can be given for arbitrary compact 
Lie groups $\G$. 
Operators in Example \ref{EX:subL} are locally hypoelliptic, but
the following corollary applies to operators which are only
globally hypoelliptic.

\begin{cor}\label{COR:vfs}
Let $X$ be a left-invariant
real vector field on $\G$. 
Then there exists a discrete exceptional set 
$\mathscr C\subset\mathrm i\mathbb R$, 
such that for any complex number 
$c\not\in\mathscr C$ the operator
$X+c$ is invertible with inverse in $\op\mathscr S^{0}_{0}(\G)$.
Consequently, the inequality
$$
  \|f\|_{L^p(\G)} \le C_p \|(X+c)f\|_{W^{p,\varkappa|\frac1p-\frac12|}(\G)}
$$
holds true for all $1<p<\infty$ and all functions $f$ from that 
Sobolev space. 
\end{cor}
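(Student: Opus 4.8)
The plan is to pass to the symbol side, where the left-invariant operator $X+c$ becomes multiplication by $\sigma_X(\xi)+cI$, and to read off both the invertibility and the symbol class of the inverse from the fact that $\sigma_X(\xi)$ is normal. By \eqref{EQ:inv-symbol0} the symbol of $X+c$ is $\sigma_{X+c}(\xi)=\sigma_X(\xi)+cI$ with $\sigma_X(\xi)=(X\xi)(1)=\d\xi(X)$. Since $X$ is a real element of $\mathfrak g$ and each $\xi$ is unitary, $\d\xi(X)$ is skew-Hermitian, hence normal with spectrum contained in $\i\R$. Therefore $\sigma_X(\xi)+cI$ is invertible for every $[\xi]\in\Gh$ as soon as $-c$ avoids $\bigcup_{[\xi]\in\Gh}\mathrm{spec}\,\sigma_X(\xi)\subset\i\R$, and normality yields the exact resolvent identity $\n{(\sigma_X(\xi)+cI)^{-1}}_{op}=\mathrm{dist}\p{-c,\mathrm{spec}\,\sigma_X(\xi)}^{-1}$. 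For $c\notin\i\R$ this is bounded by $\abs{\Re c}^{-1}$ uniformly in $\xi$; consequently the exceptional set is contained in $\i\R$, and I would define $\mathscr C\subset\i\R$ to be the set of those $c$ for which this distance fails to be bounded away from zero. Away from $\mathscr C$ the sequence $\tau(\xi):=(\sigma_X(\xi)+cI)^{-1}$ is a well-defined, uniformly bounded multiplier, so $(X+c)^{-1}$ exists and has symbol $\tau$.

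To see $\tau\in\mathscr S^0_0(\G)$ I would exploit that differences act trivially on vector-field symbols. Because the convolution kernel of $X$ is $X\delta_{1}$ (the action of $X$ on the Dirac distribution at $1\in\G$) and $q_{ik}$ vanishes at $1$, a one-line computation gives $\mathbb D_{ik}\sigma_X(\xi)=-(\d\xi_0(X))_{ik}\,I$, a constant scalar matrix, while $\mathbb D^\beta\sigma_X=0$ for $\abs{\beta}\ge2$. Applying $\mathbb D^\alpha$ to the identity $(\sigma_X+cI)\tau=I$ and using the twisted Leibniz rule $\mathbb D_{ij}(\sigma\varrho)=(\mathbb D_{ij}\sigma)\varrho+\sigma(\mathbb D_{ij}\varrho)+\sum_k(\mathbb D_{ik}\sigma)(\mathbb D_{kj}\varrho)$, the expansion terminates after finitely many steps; at first order it reads $\sum_k\p{\delta_{ik}I+a_{ik}\tau}\mathbb D_{kj}\tau=-a_{ij}\tau^2$ with $a_{ik}=-(\d\xi_0(X))_{ik}$. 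The coefficient operator $I+A\otimes\tau$, $A=-\d\xi_0(X)$, is invertible with a bound uniform in $\xi$ as long as $c$ stays away from $\mathscr C$, so I obtain $\n{\mathbb D^\alpha\tau(\xi)}_{op}\le C_\alpha$ by induction on $\abs{\alpha}$, which is the assertion $(X+c)^{-1}\in\mathrm{op}\mathscr S^0_0(\G)$.

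The stated inequality is then the $\rho=0$ instance of the mapping property of symbol classes: by Corollary \ref{cor2} an operator with symbol in $\mathscr S^0_0(\G)$ maps $W^{p,\varkappa\abs{\frac1p-\frac12}}(\G)$ boundedly into $L^p(\G)$ for $1<p<\infty$. Substituting $f=(X+c)^{-1}(X+c)f$ then gives $\n{f}_{L^p(\G)}=\n{(X+c)^{-1}(X+c)f}_{L^p(\G)}\le C_p\n{(X+c)f}_{W^{p,\varkappa\abs{\frac1p-\frac12}}(\G)}$, as claimed.

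I expect the main obstacle to be the discreteness of $\mathscr C$. For $c\notin\i\R$ there is nothing to prove, so everything rests on the purely imaginary case: one must show that $\bigcup_{[\xi]\in\Gh}\mathrm{spec}\,\sigma_X(\xi)$ is locally finite in $\i\R$, so that a uniform spectral gap $\delta(c)>0$ is available for all $c$ outside a discrete set. This is precisely where the structure of $\G$ enters, since these eigenvalues are the values taken on $X$ by the weights of $\G$; securing such a gap is the delicate point, and once it is in hand the bounds of the two previous paragraphs hold uniformly in $\xi$ and the remainder of the argument is formal.
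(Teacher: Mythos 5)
Your outline coincides with the paper's own proof: both pass to the symbol side, use that $\sigma_X(\xi)=(X\xi)(1)$ is skew-Hermitian (the paper diagonalises it by a choice of bases), observe that first differences of $\sigma_X$ are constant scalar matrices $\tau_{ij}I$ while all higher differences vanish, apply the finite Leibniz rule \eqref{EQ:Leibniz-first} to $\sigma_{X+c}\,\sigma_{X+c}^{-1}=I$ to control $\mathbb D^\alpha\sigma_{X+c}^{-1}$, and conclude with Corollary \ref{cor2}. So the method is the same; the issue is in the execution of the middle step.

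The genuine gap is the sentence asserting that the coefficient operator $I+A\otimes\tau$ is invertible with a bound uniform in $[\xi]\in\Gh$ whenever $c$ avoids your $\mathscr C$, where you defined $\mathscr C$ purely by a spectral-gap condition for $\sigma_X$ at $-c$ itself. Unwinding that operator (diagonalising $A$ and $\sigma_X(\xi)$ simultaneously, which is what the paper's choice of bases achieves), its eigenvalues are $(c+\i(\nu+\mu))/(c+\i\nu)$ with $\i\nu\in\mathrm{spec}\,\sigma_X(\xi)$ and $\i\mu\in\mathrm{spec}\,A$; hence uniform invertibility requires a uniform spectral gap not at $-c$ but at the \emph{shifted} points $-c-\i\mu$, i.e.\ at $c+\tau_{jj}$, and, after iterating the Leibniz rule for higher differences, at $c$ plus all $\N$-linear combinations of the $\tau_{jj}$. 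This is precisely what the paper's recursion makes explicit: $\mathbb D_{ij}\sigma_{X+c}^{-1}=0$ for $i\ne j$ and $\mathbb D_{jj}\sigma_{X+c}^{-1}=-\tau_{jj}\,\sigma_{X+c}^{-1}\,\sigma_{X+c+\tau_{jj}}^{-1}$, and it is why the paper's exceptional set is the enlarged set $\mathrm{spec}(-X)-\i\,\N[\tau_{11},\ldots,\tau_{ll}]$ rather than $\mathrm{spec}(-X)$ alone. Your version can be rescued: $\bigcup_{[\xi]\in\Gh}\mathrm{spec}\,\sigma_X(\xi)$ is the image of the full weight lattice evaluated at $X$, hence a subgroup of $\i\R$ containing every $\i\mu$, so a uniform gap at $-c$ persists under these shifts; but this lattice/group argument is exactly the missing content of the step, and without it the induction on $|\alpha|$ does not close. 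Relatedly, the discreteness of $\mathscr C$, which you explicitly leave open as the ``main obstacle,'' cannot be extracted from a gap-type definition of $\mathscr C$ at all; the paper instead names the exceptional set explicitly and reads off its structure from the description of $\mathrm{spec}(-X)$ (cf.\ Example \ref{EX:vfs} for $\SU2$, where it is $\frac\i2\Z$), so on this point too your proposal stops short of what the statement claims.
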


We prove this corollary later, but now only give its refinement 
on $\SU2$.

\begin{expl}\label{EX:vfs}
To fix the scaling on the Lie algebra ${\mathfrak{su}(2)}$,
let $(\phi,\theta,\psi)$ be the (standard)
Euler angles on $\SU2$ and let
$D_3=\partial/\partial\psi$. Let $X$ be a left-invariant
vector field on $\SU2$ normalised so that
$\|X\|=\|D_3\|$ with respect to the Killing norm.
Then it was shown in \cite{RTW10} that
$\i \mathscr C=\frac12\Z$, and $X+c$ is invertible if and only
if $\i c\not\in \frac{1}{2}\Z$. For such $c$, the inverse 
$(X+c)^{-1}$ has symbol in $\mathscr S^{0}_{0}(\SU2)$. 
The same conclusions remain true if we replace
$\SU2$ by $\mathbb S^3$.
In particular, we get that 
$$
  \|f\|_{L^p(\mathbb S^3)} \le C_p 
  \|(X+c)f\|_{W^{p,2|\frac1p-\frac12|}(\mathbb S^3)}
$$
holds true for all $1<p<\infty$ and all functions $f$ from that 
Sobolev space.
We note that this estimate is non-localisable since operators
$X+c$ are locally non-invertible and also not 
locally sub-elliptic (unless $n=1$).
\end{expl}

\begin{rem}\label{REM:tori} 
The H\"ormander multiplier theorem \cite{Horm},
although formulated in $\Rn$, has a natural analogue on the
torus $\Tn$. The refinement in Theorem
\ref{thm:main2} on the top order difference brings a refinement
of the toroidal multiplier theorem, at least for some dimensions.
If $G=\Tn=\Rn/\Zn$, the set $\Delta_0$ in Remark \ref{REM:newDelta0}
consists of $2n$ functions $\e^{\pm 2\pi\i x_j}$, $1\leq j\leq n$.
Consequently, we have that 
$$\rho^2(x)=2n-\sum_{j=1}^n 
\p{\e^{2\pi\i x_j}+\e^{-2\pi\i x_j}}$$ 
in \eqref{eq:rhoDef}, and hence
$$\rhodiff \sigma(\xi)=2n \sigma(\xi)-\sum_{j=1}^n 
\p{\sigma(\xi+e_j)+\sigma(x-e_j)}$$ in \eqref{EQ:rhodiff},
where $\xi\in\Zn$ and $e_j$ is its $j$th unit basis vector
in $\Zn$.

A (translation) invariant operator $A$ and its symbol $\sigma_A$
are related\footnote{On the torus we can abuse the notation by
writing $\sigma_A(k)$ for $\sigma_A(e_k)$ for
$e_k(x)=\e^{2\pi\i x\cdot k}$.
For the consistent development
of the toroidal quantization of general operators on
the tori see \cite{RT-torus} or \cite{RTbook},
with an earlier partial exposition in \cite{RT-torus0}.}
 by $$\sigma_A(k)=\e^{-2\pi\i x\cdot k}
(A\e^{2\pi\i x\cdot k})=(A\e^{2\pi\i x\cdot k})|_{x=0}$$ and
$$A\phi(x)=\sum_{k\in\Zn} e^{2\pi\i x\cdot k} 
\sigma_A(k) \widehat{\phi}(k).$$
Thus, it follows from Theorem \ref{thm:main2} that, for example
on ${\mathbb T}^3$, a translation invariant operator
$A$ is weak (1,1) type and bounded on $L^p({\mathbb T}^3)$ 
for all $1<p<\infty$ provided 
that there is a constant
$C>0$ such that $$|\sigma_A(k)|\leq C,$$
$$|k||\sigma_A(k+e_j)-\sigma_A(k)|\leq C,$$ and
\begin{equation}\label{EQ:torus}
 |k|^2| \sigma_A(k)-\frac16\sum_{j=1}^3 
\p{\sigma_A(k+e_j)+\sigma_A(k-e_j)}|\leq C,
\end{equation} 
for all
$k\in\Z^3$ and all (three) unit vectors $e_j$, $j=1,2,3$.
Here in \eqref{EQ:torus}
we do not make assumptions on all second order
differences, but only on one of them.
\end{rem} 

\section{Proofs}
\label{SEC:proofs}

The proof of Theorem~\ref{thm:main1} is divided into 
several sections. First we introduce the tools 
we need to prove Calderon--Zygmund type estimates for convolution
kernels. Later on we show how to reduce the above theorem to a
statement of Coifman and de Guzman, see \cite{CdG70} and 
also \cite{CWbook}. Finally, we use properties of the
root system with finite Leibniz rules for difference
operators to prove the refinement of Theorem
\ref{thm:main1} given in Theorem \ref{thm:main2}.

\subsection{A suitable pseudo-distance on $\G$}\label{sec:2.1} 
At first we construct
a suitable pseudo-distance on the group $\G$ in terms of a 
minimal set of representations.
We now define with $n=\dim\G$
\begin{equation}\label{eq:rhoDef}
  \rho^2 (g) = n - \trace \mathrm{Ad}(g) = 
  \sum_{\xi\in\Delta_0} (d_\xi - \trace \xi(g)),
\end{equation}
where $\mathrm{Ad} : \G \to \mathrm U(\mathfrak g)
\simeq \mathrm U(\dim \G)$ 
denotes the adjoint representation of the
Lie group $\G$ and 
$$\mathrm {Ad} = 
(\dim Z(G))1 \oplus \bigoplus_{\xi\in\Delta_0} \xi$$
is its Peter-Weyl decomposition into 
irreducible components.  
Here, $1$ denotes the trivial one-dimensional representation.
 For simplicity we assume first that
the group is semi-simple, 
i.e., that the centre $Z(\G)$ of the group $\G$ is trivial. 
Later on 
we will explain the main modifications for the general situation,
see Remark \ref{REM:newDelta0}.

Note, that $\rho^2(g)$ is nonnegative by definition and smooth. 
At first we claim that $\rho$ defines a pseudo-distance 
$$d_\rho(g,h) = \rho(g^{-1}h).$$

\begin{lem}\label{lem:2.6} The above defined function $\rho(g)$ satisfies
\begin{enumerate}
\item $\rho^2(g)\ge 0$ and $\rho^2(g) = 0$ if and only if $g=1$ 
is the identity in $\G$;
\item $\rho^2$ vanishes to second order in $g=1$;
\item $\rho^2$ is a class function, in particular it satisfies 
$\rho^2(g^{-1}) ={\rho^2(g)}$ and $\rho^2(gh^{-1}) = \rho^2(h^{-1}g)$;
\item $|\rho(gh^{-1})- \rho(g)| \le C \rho(h)$ 
for some constant 
$C>0$ and all $g,h\in\G$;
\item $\rho(gh^{-1}) \le C (\rho(g)+\rho(h))$ 
for some constant $C>0$ and all $g,h\in\G$. 
\end{enumerate}
\end{lem}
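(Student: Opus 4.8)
The plan is to reduce the entire lemma to a single reformulation of $\rho$ as a Hilbert--Schmidt distance. Since $\mathrm{Ad}(g)$ preserves the (negative) Killing form, it is an orthogonal, hence unitary, matrix, so $\mathrm{Ad}(g)\mathrm{Ad}(g)^*=\mathrm I$ and $\trace\mathrm{Ad}(g)$ is real. Expanding directly,
\[
   \n{\mathrm{Ad}(g)-\mathrm I}_{HS}^2
   = \trace\p{2\,\mathrm I-\mathrm{Ad}(g)-\mathrm{Ad}(g)^*}
   = 2\p{n-\trace\mathrm{Ad}(g)} = 2\rho^2(g),
\]
so that $\rho(g)=\tfrac1{\sqrt2}\n{\mathrm{Ad}(g)-\mathrm I}_{HS}$. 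I would establish this identity first, because it turns (4) and (5) into the triangle inequality for a genuine norm and immediately yields the nonnegativity in (1).

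For the separation statement in (1), the identity shows $\rho^2(g)=0$ forces $\mathrm{Ad}(g)=\mathrm I$, i.e.\ $g\in\ker\mathrm{Ad}=Z(\G)$; the semi-simplicity hypothesis $Z(\G)=\{1\}$ then gives $g=1$. This is precisely where semi-simplicity is used, and it already indicates the modification required in general: one must enlarge $\Delta_0$ so that the combined representation is faithful on the centre. Property (3) is pure character theory: $\trace\mathrm{Ad}(\cdot)$ is the character of the adjoint representation, hence a class function, so $\rho^2$ is central; then $\rho^2(g^{-1})=\rho^2(g)$ follows from $\trace\mathrm{Ad}(g^{-1})=\trace\mathrm{Ad}(g)^{\!\top}=\trace\mathrm{Ad}(g)$, and $\rho^2(gh^{-1})=\rho^2(h^{-1}g)$ from the conjugacy relation $h^{-1}(gh^{-1})h=h^{-1}g$ together with centrality.

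For the second-order vanishing (2) I would expand along one-parameter subgroups. Writing $\mathrm{Ad}(\exp tX)=\e^{t\,\mathrm{ad}(X)}$ gives
\[
   \rho^2(\exp tX)= n-\trace\e^{t\,\mathrm{ad}(X)}
   = -t\,\trace\mathrm{ad}(X)-\tfrac{t^2}{2}\trace\mathrm{ad}(X)^2+O(t^3).
\]
The linear term vanishes since $\trace\mathrm{ad}(X)=0$ on a semi-simple (indeed unimodular) Lie algebra, while $-\trace\mathrm{ad}(X)^2=-B(X,X)$ is positive definite because the Killing form $B$ is negative definite on a compact semi-simple algebra. Hence $\rho^2$ vanishes to exactly second order at $1$ with nondegenerate Hessian given by $-B$.

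Finally, (4) and (5) drop out of the Hilbert--Schmidt identity. Using $\mathrm{Ad}(gh^{-1})-\mathrm{Ad}(g)=\mathrm{Ad}(g)\p{\mathrm{Ad}(h^{-1})-\mathrm I}$ and the fact that the orthogonal matrix $\mathrm{Ad}(g)$ preserves the Hilbert--Schmidt norm, the reverse triangle inequality gives
\[
   \abs{\rho(gh^{-1})-\rho(g)}\le \tfrac1{\sqrt2}\n{\mathrm{Ad}(gh^{-1})-\mathrm{Ad}(g)}_{HS}
   =\tfrac1{\sqrt2}\n{\mathrm{Ad}(h^{-1})-\mathrm I}_{HS}=\rho(h^{-1})=\rho(h),
\]
which is (4) with $C=1$; splitting $\mathrm{Ad}(gh^{-1})-\mathrm I=\mathrm{Ad}(g)\p{\mathrm{Ad}(h^{-1})-\mathrm I}+\p{\mathrm{Ad}(g)-\mathrm I}$ and applying the ordinary triangle inequality yields (5), again with $C=1$. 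The only genuinely delicate point is (2), where one must check both the vanishing of the first-order term and the nondegeneracy of the quadratic form; the metric inequalities (4)--(5) are by contrast the easy part once the Hilbert--Schmidt reformulation is in place, and in fact produce the sharp constant $C=1$, so the constants allowed in the statement cause no trouble. The subtler conceptual issue, flagged above, is that (1) relies on semi-simplicity in an essential way, and the correct general version requires choosing $\Delta_0$ faithful on $Z(\G)$ — exactly the refinement deferred to the non-semi-simple discussion.
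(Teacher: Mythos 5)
Your proposal is correct, and it takes a genuinely different route from the paper's proof. Your key move is the identity $\rho(g)=\tfrac{1}{\sqrt2}\|\mathrm{Ad}(g)-\mathrm I\|_{HS}$, which the paper never exploits: it turns (1) into the statement $\ker\mathrm{Ad}=Z(\G)$, and it gives (4) and (5) as exact (reverse) triangle inequalities with the sharp constant $C=1$, using only that $\mathrm{Ad}(g)$ is orthogonal and that the Hilbert--Schmidt norm is invariant under left multiplication by orthogonal matrices. The paper instead argues representation-by-representation: (1) via the elementary character bound $|\trace\xi(g)|\le d_\xi$ with equality iff $\xi(g)=\mathrm I$; (2) by differentiating the unitarity relation $\xi(g)\xi(g)^*=\mathrm I$ twice for each $\xi\in\Delta_0$, summing to get $(v,\mathrm{Hess}\,\rho^2(1)v)=-\sum_\xi\|\xi'(1)v\|_{HS}^2$ and concluding nondegeneracy from $\bigcap_\xi\ker\xi'(1)=\{0\}$; and (4) by a soft compactness argument (both sides vanish to first order at $h=1$, so a constant exists by compactness of $\G$), with (5) deduced from (4). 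Your proof of (2) via $\mathrm{Ad}(\exp tX)=\e^{t\,\mathrm{ad}(X)}$ identifies the Hessian intrinsically as $-B$, the negative Killing form, which is equivalent to the paper's conclusion but more explicit about where positive-definiteness comes from. What your approach buys: explicit sharp constants, a cleaner and fully rigorous replacement for the paper's somewhat sketchy compactness step in (4) (note $\rho$ itself is only Lipschitz, not smooth, at $1$, so "vanishing to first order plus compactness" requires a little care to make precise), and a transparent metric interpretation of $\rho$. What the paper's approach buys: since it works with an arbitrary finite set $\Delta_0$ of unitary representations rather than with $\mathrm{Ad}$ specifically, it transfers verbatim to the extended set $\Delta_0$ of Remark \ref{REM:newDelta0} in the non-semi-simple case; your $\mathrm{Ad}$-based identity must there be replaced by the direct-sum version $\rho^2(g)=\tfrac12\sum_{\xi\in\Delta_0}\|\xi(g)-\mathrm I\|_{HS}^2$, with faithfulness of $\bigoplus_{\xi\in\Delta_0}\xi$ replacing $\ker\mathrm{Ad}=Z(\G)$ --- a straightforward adaptation that you correctly flag but do not carry out.
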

\begin{proof}
(1) At first we note that for any (not necessarily irreducible) 
unitary representation $\xi$ trivially 
$|\trace\xi(g)|\le d_\xi$ 
and therefore $\Re(d_\xi - \trace\xi(g)) \ge 0$. Furthermore, 
$\trace\xi(g)=d_\xi$ is 
equivalent to $\xi(g)=\mathrm I$. Therefore, $\rho(g)=0$ 
implies that $\mathrm {Ad}(g)=\mathrm I$ 
and therefore $g\in Z(\G)$, i.e., $g=1$.

(2) Differentiating the identity $\xi(g) \xi(g)^* = \mathrm I$ 
twice at the identity element 
and denoting $\xi^*(g)=\xi(g)^*$
implies the equations
\begin{align*}
 &  \xi'(1) + {\xi^*}'(1) = 0,\\
& \xi''(1) + 2 \xi'(1)\otimes {\xi^*}'(1) + {\xi^*}''(1) = 0,
\end{align*}
the first implying that $(\Re \trace\xi)'(1)=0$, 
while the second one gives for each $v\in\mathfrak g=\mathrm T_1\G$
the quadratic form
\begin{equation*}
  (v , (\Re \trace \xi)''(1)  v) = -   \|\xi'(1) v\|_{\HS}^2.
\end{equation*}
Summing this over $\xi\in\Delta_0$ implies
\begin{equation*}
   (v, \mathrm{Hess}\, {\rho^2}(1) v) = - \sum_{\xi\in\Delta_0} 
   \|\xi'(1) v\|_{\HS}^2,
\end{equation*}
and, therefore, if $v\in\mathfrak g$ is such that the 
left-hand side vanishes, 
then $v\in\cap_{\xi}\ker\xi'(1)$. By 
$Z(\G)=\{1\}$ and the definition of 
$\rho^2(g)$ this implies $v=0$. 

(3) Obvious by construction.

(4) We observe that both the left and the right 
hand side vanish 
exactly in $h=1$ to first order. 
The existence of the constant $C$ 
follows therefore just by compactness of $\G$.

(5) follows directly by (4).
\end{proof}

\begin{rem}\label{REM:newDelta0}
If the centre of the group is non-trivial, we have to make a 
slight change to the definition of $\rho^2(g)$. 
We have to include $2\dim Z(\G)$ 
additional representations to the set $\Delta_0$ 
defined by the choice of an
isomorphism 
$Z(\G)\simeq\mathbb T^\ell=\mathbb R^\ell/{\mathbb Z^\ell}$. 
For each coordinate $\theta_j$ we include both
$\theta\mapsto \mathrm e^{\pm 2\pi\mathrm i \theta_j}$, 
suitably extended to the maximal torus and then to $\G$. 
The statement of Lemma~\ref{lem:2.6} remains true for 
both modifications. 
In the following we assume that $\Delta_0$ and $\rho(g)$ 
are defined in this way.
In general, for the statements below to be true, any extension of
$\Delta_{0}$ will work as long as the function
$\rho^{2}(g)$ in \eqref{eq:rhoDef} is the square of a distance
function on $G$ in a neighbourhood of the neutral element.
\end{rem}

\subsection{A special family of mollifiers} Let $\tilde \varphi\in C_0^\infty(\mathbb R)$ be such that $\tilde \varphi\ge 0$,
$\tilde\varphi(0)=1$ and $\tilde \varphi^{(\ell)}(0)=0$ for all $\ell\ge 1$. Then for $r>0$ we define
\begin{equation}
   \varphi_r (g) = c_r \tilde\varphi(r^{-1/n} \rho(g)), \qquad \int_\G \varphi_r(g) \d g = 1,
\end{equation}
the normalisation condition used to define $c_r$. As $r\to0$ obviously
$\varphi_r\to \delta_1\in\mathcal D'(\G)$. Let furthermore 
$$\psi_r(g) := \varphi_r(g) - \varphi_{r/2}(g).$$ 
At first we check 
the conditions of Coifman--de Guzman \cite{CdG70} (modulo the 
obvious modifications) for these functions.

\begin{lem}
\begin{enumerate}
\item $\sup_g|\varphi_r(g)| \sim c_r \sim r^{-1}$ as $r\to0$.
\item $\|\varphi_r\|_2 \sim r^{-1/2}$ as $r\to0$.
\item $\varphi_r*\varphi_s=\varphi_s*\varphi_r$.
\item $ \int_{\rho(g)\ge t^{1/n}} \varphi_r(g)\d g \le  C_N \big(\frac rt \big)^N$ for all $N\ge 0$.
\item $\int_\G |\varphi_r(gh^{-1}) -\varphi_r(g)| \d g \le C'\frac{\rho(h)} {r^{1/n}}$.
\end{enumerate}
\end{lem}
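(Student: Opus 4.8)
The plan is to derive all five assertions from a single geometric fact together with the properties of $\rho$ recorded in Lemma~\ref{lem:2.6}. First I would establish the volume asymptotics of the $\rho$-balls: writing $V(s)=\abs{\{g\in\G:\rho(g)\le s\}}$, I claim $V(s)\sim s^n$ as $s\to0$, where $n=\dim\G$. This holds because $\rho^2$ vanishes to exactly second order at $1$ with nondegenerate Hessian (the computation in the proof of Lemma~\ref{lem:2.6}(2) shows the Hessian is definite), so $\rho(g)$ is comparable to the Riemannian distance from $g$ to $1$ near the identity; hence $\{\rho\le s\}$ is trapped between two geodesic balls of radius $\sim s$, each of volume $\sim s^n$. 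Passing to the pushforward of Haar measure under $\rho$, i.e. using $\int_\G F(\rho(g))\d g=\int_0^\infty F(s)\,\d V(s)$ with $\d V(s)\sim n a_n s^{n-1}\d s$ near $0$, and substituting $s=r^{1/n}u$, I would get $\int_\G\tilde\varphi(r^{-1/n}\rho(g))\d g\sim a_n r\int_0^\infty\tilde\varphi(u)\,nu^{n-1}\d u$. Since the last integral is a finite positive constant, the normalisation condition forces $c_r\sim r^{-1}$, and this is the crux of the whole lemma.

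With $c_r\sim r^{-1}$ in hand, items (1) and (2) are immediate: $\sup_g\abs{\varphi_r(g)}=c_r\n{\tilde\varphi}_\infty\sim r^{-1}$, and the same change of variables applied with $\tilde\varphi^2$ in place of $\tilde\varphi$ gives $\n{\varphi_r}_2^2=c_r^2\int_\G\tilde\varphi^2(r^{-1/n}\rho(g))\d g\sim c_r^2\cdot r\sim r^{-1}$, i.e. $\n{\varphi_r}_2\sim r^{-1/2}$. For item (4) I would use only that $\tilde\varphi\in C_0^\infty(\R)$: if $\operatorname{supp}\tilde\varphi\subset[-M,M]$, then $\varphi_r$ is supported in $\{\rho(g)\le Mr^{1/n}\}$, so the integral in (4) vanishes identically once $t\ge M^n r$; in the remaining range $t<M^n r$ one has $(r/t)^N>M^{-nN}$, and bounding the integral by $\int_\G\varphi_r=1$ yields (4) with $C_N=M^{nN}$ for every $N\ge0$.

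For the commutativity (3) I would argue algebraically rather than geometrically. Since $\rho$ is a class function by Lemma~\ref{lem:2.6}(3), each $\varphi_r$ is central, so by Schur's lemma its Fourier coefficient $\widehat{\varphi_r}(\xi)$ is a scalar multiple of the identity for every $[\xi]\in\Gh$. Scalar matrices commute, so $\widehat{\varphi_r}(\xi)\widehat{\varphi_s}(\xi)=\widehat{\varphi_s}(\xi)\widehat{\varphi_r}(\xi)$, and since the Fourier transform turns convolution into a product of symbols and is injective on $L^1(\G)$, this gives $\varphi_r*\varphi_s=\varphi_s*\varphi_r$; in other words the central functions form a commutative convolution subalgebra.

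The one estimate that needs genuine care is (5), and I expect it to be the main obstacle, because a naive pointwise bound integrated over all of $\G$ loses a factor and overshoots. My plan is to combine three facts multiplicatively on the set where the integrand is nonzero. First, by the one-variable mean value theorem applied to $\tilde\varphi$, $\abs{\varphi_r(gh^{-1})-\varphi_r(g)}\le c_r\n{\tilde\varphi'}_\infty r^{-1/n}\abs{\rho(gh^{-1})-\rho(g)}$, which notably needs no differentiability of $\rho$ itself. Second, Lemma~\ref{lem:2.6}(4) gives $\abs{\rho(gh^{-1})-\rho(g)}\le C\rho(h)$, so the pointwise difference is bounded by $C'c_r r^{-1/n}\rho(h)$ uniformly in $g$. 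Third, the integrand is supported in $\{\rho(g)\le Mr^{1/n}\}\cup\{\rho(gh^{-1})\le Mr^{1/n}\}$, the union of the support of $\varphi_r$ with a right-translate of it, hence of Haar measure $\lesssim V(Mr^{1/n})\sim r$. Multiplying the uniform pointwise bound by this measure gives $\int_\G\abs{\varphi_r(gh^{-1})-\varphi_r(g)}\d g\lesssim c_r\,r^{-1/n}\rho(h)\cdot r\sim r^{-1/n}\rho(h)$, using $c_r\sim r^{-1}$ once more, which is exactly the claimed bound. The delicate point is precisely the bookkeeping that the $r^{-1}$ from $c_r$, the $r^{-1/n}$ from the rescaling, and the factor $r$ from the volume of the support combine to leave a single $r^{-1/n}$.
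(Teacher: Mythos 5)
Your proposal is correct and follows essentially the same route as the paper: the crux in both is the volume scaling $\abs{\{g:\rho(g)\le s\}}\sim s^n$ near the identity (the paper realises it via a chart in which $\rho(g)=|x|$, you via the nondegenerate Hessian of $\rho^2$ from Lemma~\ref{lem:2.6}(2)), giving $c_r\sim r^{-1}$, and item (5) is proved identically by combining the mean value theorem, Lemma~\ref{lem:2.6}(4), and the $O(r)$ measure of the set where the integrand is nonzero. Your minor variations --- computing (2) directly with $\tilde\varphi^2$ instead of interpolating against the normalisation, the support-plus-trivial-bound argument for (4), and the Fourier/Schur justification of the class-function fact in (3) --- are all sound and do not change the substance of the argument.
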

\begin{proof}
(1) We can find a chart in the neighbourhood of the identity element 
such that $\rho(g) = |x|$ and $\d g = \nu(x)\d x$ for some smooth density $\nu$ with
$\nu(0)\ne0$. Then direct calculation yields for small $r$
\begin{align*}
  c_r^{-1} &= \int_\G \tilde\varphi(r^{-1/n} |x| )\nu(x)\d x = \int_0^1 \tilde\varphi(r^{-1/n} s) s^{n-1} \int_{\mathbb S^{n-1}} \nu(s\theta) \d\theta \d s\\
  &\lesssim \int_0^1 \tilde\varphi(r^{-1/n} s) s^{n-1}  \d s \sim r.
\end{align*}

(2) follows from (1) by interpolation with the normalisation 
condition used.

(3) this follows from $\varphi_r$ being a class function.

(4) Again direct computation of the left-hand side yields for 
sufficiently small $r$
\begin{align*}
& c_r  \int_{s\ge t^{1/n}} \tilde\varphi(r^{-1/n} s) s^{n-1} \int_{\mathbb S^{n-1}} \nu(s\theta)\d\theta \d s\\
&\qquad \lesssim c_r \int_{s\ge t^{1/n}}  \tilde\varphi(r^{-1/n} s) s^{n-1} \d s 
 \sim F(\textstyle\frac tr) 
\end{align*}
for a function $F\in C_0^\infty(\mathbb R_+)$, which implies in particular the desired 
estimate.

(5) Using that $\tilde\varphi\in C^\infty_0(\mathbb R)$ the mean value theorem implies in combination with Lemma~\ref{lem:2.6}(4)
\begin{align*}
|\varphi_r(gh^{-1})-\varphi_r(g)| &= c_r |\tilde\varphi(r^{-1/n}\rho(gh^{-1}))-\tilde\varphi(r^{-1/n}\rho(g))|\\
&\lesssim c_r r^{-1/n} |\rho(gh^{-1})-\rho(g)| \lesssim c_r r^{-1/n} \rho(h).
\end{align*}
Furthermore, the first expression is non-zero for small $r$ only
if either of the terms is non-zero, which gives $\rho(g)\lesssim r^{1/n}$ or $\rho(gh^{-1})\lesssim r^{1/n}$.
This corresponds for small $r$ to two balls of radius $r^{1/n}$, 
i.e., volume $r$.
Integration over $g\in\G$ implies the desired statement.
\end{proof}

As $\psi_r$ and $\rho^n$ satisfy all assumptions of 
\cite{CdG70}, we have the 
following criterion. 

\begin{crit} Assume $A: L^2(\G)\to L^2(\G)$ is a left-invariant 
operator on $\G$ satisfying
\begin{equation}\label{eq:CdG-cond}
  \int_\G | A\psi_r(g) |^2 \rho^{n(1+\epsilon)} (g) \d g 
  \le C r^{\epsilon} 
\end{equation}
for some constants $\epsilon>0$ and $C>0$ uniform in $r$. 
Then $A$ is of weak 
type $(1,1)$ and bounded on all $L^p(\G)$ for $1<p<\infty$. 
\end{crit}

Later on we will need some more properties of the functions $\psi_r$. 
We collect them as follows

\begin{lem}\label{lem:psi-est}
Let $q\in C^\infty(\G)$ be a smooth function vanishing to order
 $t$ in $1$. Then 
\begin{equation}
   \| q(g) \psi_r(g) \|_{H^{-s}} \le C_{q,s} r^{\frac{t+s}{n} - \frac12}
\end{equation}
for all $s\in[0,1+\frac n2]$. 
\end{lem}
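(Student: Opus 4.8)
The plan is to estimate the norm by duality,
\begin{equation*}
  \n{q\psi_r}_{H^{-s}} = \sup\b{\,\abs{\jp{q\psi_r,\phi}} : \n{\phi}_{H^s}\le 1\,},
\end{equation*}
and to exploit that, for small $r$, the function $\psi_r$ is supported in the $\rho$-ball $\{\rho(g)\lesssim r^{1/n}\}$ of volume $\sim r$, while $q$ vanishes to order $t$ at $1$ so that $\abs{q(g)}\lesssim\rho(g)^t\lesssim r^{t/n}$ there. Together with $\sup_g\abs{\psi_r}\sim r^{-1}$ and $\n{\psi_r}_2\sim r^{-1/2}$ from the preceding lemma, a dimensional analysis at the concentration scale $\lambda=r^{1/n}$ already predicts the exponent: to leading order $q\psi_r\approx\lambda^{t-n}G(\cdot/\lambda)$ for a fixed compactly supported profile $G$, and the rescaling identity $\n{G(\cdot/\lambda)}_{H^{-s}}\sim\lambda^{s+n/2}\n{G}_{\dot H^{-s}}$ gives $\lambda^{t-n}\cdot\lambda^{s+n/2}=r^{(t+s)/n-1/2}$. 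It therefore suffices to prove the two endpoints $s=0$ and $s=1+\tfrac n2$ and to interpolate, since $[H^{-s_0},H^{-s_1}]_\theta=H^{-s_\theta}$ reproduces exactly the claimed power of $r$ at every intermediate $s$. The endpoint $s=0$ is immediate from the support and size bounds: $\n{q\psi_r}_2\lesssim r^{t/n}\n{\psi_r}_2\sim r^{t/n-1/2}$.

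For intuition on the interior range $0\le s<\tfrac n2$ one can argue directly. Pairing with $\phi$ and using the Sobolev embedding $H^s\hookrightarrow L^{p_s}$ with $\tfrac1{p_s}=\tfrac12-\tfrac sn$, then H\"older over the ball $B=\{\rho\lesssim\lambda\}$ of volume $\abs B\sim\lambda^n$, with $\abs{q}\lesssim\lambda^t$ and $\abs{\psi_r}\lesssim\lambda^{-n}$ on $B$, gives
\begin{equation*}
  \abs{\jp{q\psi_r,\phi}}\lesssim\lambda^{t-n}\int_B\abs\phi\le\lambda^{t-n}\abs B^{1-1/p_s}\n\phi_{L^{p_s}}\lesssim\lambda^{t-n}\lambda^{n/2+s}\n\phi_{H^s}=r^{(t+s)/n-1/2}\n\phi_{H^s}.
\end{equation*}
At the top endpoint $s=1+\tfrac n2$ the embedding $H^s\hookrightarrow C^{0,\gamma}$ with $\gamma=s-\tfrac n2=1$ makes $\phi$ Lipschitz, so $\abs{\phi(g)-\phi(1)}\lesssim\rho(g)$ using that $\rho$ is comparable to the geodesic distance near $1$ (Lemma~\ref{lem:2.6}). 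Here the cancellation $\int_\G\psi_r\,\d g=0$ enters. Writing $\phi=\phi(1)+(\phi-\phi(1))$,
\begin{equation*}
  \jp{q\psi_r,\phi}=\int_\G q\,\psi_r\,\overline{(\phi-\phi(1))}\,\d g+\overline{\phi(1)}\int_\G q\,\psi_r\,\d g,
\end{equation*}
and the first term is controlled by $\lambda^t\cdot\lambda\cdot\n{\psi_r}_1\,\n\phi_{C^{0,1}}\lesssim\lambda^{t+1}=r^{(t+1)/n}$, which is precisely the claimed power $r^{(t+s)/n-1/2}$ at $s=1+\tfrac n2$.

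The main obstacle is the remaining constant (zero-frequency) term $\overline{\phi(1)}\int_\G q\psi_r\,\d g$. A crude size estimate only yields $\int_\G q\psi_r\,\d g=O(\lambda^t)$, which is larger than the target by a full factor of $\lambda$, so this term must be handled with care. The gain has to come from the structure of $\psi_r=\varphi_r-\varphi_{r/2}$ together with the radial (class-function) symmetry of $\psi_r$, comparing the two scales $r$ and $r/2$ and the vanishing moments thereby available; equivalently, one works modulo constants, the constant mode being harmless in the application to the Criterion, where the lemma is used on functions of vanishing mean. Once this constant-mode contribution is disposed of, complex interpolation between the endpoints $s=0$ and $s=1+\tfrac n2$ yields the stated bound for all $s\in[0,1+\tfrac n2]$. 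Throughout one restricts to small $r$; the range of $r$ bounded away from $0$ is trivial, since all the quantities involved remain bounded there.
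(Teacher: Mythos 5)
Your duality scheme is sound where it applies: the $s=0$ bound, the Sobolev--H\"older argument for $0\le s<\frac n2$, and the log-convexity of the $H^{-s}$ norms (which would propagate correct endpoint bounds to all intermediate $s$) are all fine. But the content of the lemma beyond $s<\frac n2$ is precisely the top part of the range, and there your proof has the gap you yourself flag: the zero-mode term $\overline{\phi(1)}\int_\G q\psi_r\,\d g$ is never estimated. Moreover, this gap cannot be closed in the way you suggest, because the two-scale structure of $\psi_r=\varphi_r-\varphi_{r/2}$ gives no extra vanishing for general $q$. Take $q=\rho^2$ (so $t=2$) and work in the chart where $\rho(g)=|x|$: then $\int_\G\rho^2\varphi_r\,\d g=c\,r^{2/n}+O(r^{4/n})$ with a constant $c>0$, hence $\int_\G\rho^2\psi_r\,\d g=c\p{1-2^{-2/n}}r^{2/n}+O(r^{4/n})\sim r^{t/n}$. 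Pairing with $\phi\equiv1$ gives $\n{q\psi_r}_{H^{-s}}\gtrsim r^{t/n}$, which is strictly larger than the claimed $r^{(t+s)/n-1/2}$ as soon as $s>\frac n2$. So a top endpoint valid ``for all $t$'' is false, and no interpolation starting from it can exist. The zero mode is small enough only when the leading Taylor coefficient of $q$ has vanishing spherical average or $t\le 1$ (then $\int_\G q\psi_r\,\d g=O(r^{2/n})$, using $\int_\G\psi_r\,\d g=0$ and parity), and these are exactly the cases occurring in the paper: in the proof of Theorem \ref{thm:main2} the lemma is invoked with $t+s=\varkappa\le2+\frac n2$, so $s>\frac n2$ forces $t\le1$. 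Your fallback that ``the lemma is used on functions of vanishing mean'' is also inaccurate: it is applied to $\tilde q_{\ell,j}\psi_r$, whose mean is precisely the quantity above, and the trivial representation is not excluded from the $\ell^2(\Gh)$ sum in \eqref{eq:2.20}, so the constant mode cannot simply be discarded.

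Two further points. The embedding you invoke at $s=1+\frac n2$ is false: $H^{1+n/2}$ embeds into the Zygmund class, not into $C^{0,1}$, so even the oscillatory part of your endpoint pairing loses a factor $\log(1/r)$; one must work with $C^{0,\gamma}$, $\gamma<1$, or on the Fourier side. For comparison, the paper's proof is a frequency-side computation of a different nature: it localises to a chart with $\rho(g)=|x|$, splits $q=q_N+R_N$ into a Taylor polynomial and a remainder, disposes of $R_N$ by exactly your $s=0$ argument, and estimates $\n{q_N\psi_r}_{H^{-s}}^2\sim\int\jp{\xi}^{-2s}\abs{q_N(\partial_\xi)\p{\widehat{\tilde\varphi}(r^{1/n}|\xi|)-\widehat{\tilde\varphi}(2r^{1/n}|\xi|)}}^2\d\xi$ by splitting into $r^{1/n}|\xi|\le1$ and $r^{1/n}|\xi|\ge1$; the vanishing of $\widehat\psi_r$ at $\xi=0$ --- your constant mode --- is what supplies the factor $r^{2/n}|\xi|^2$ in the low-frequency integrand and hence the stated power of $r$. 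A correct completion of your real-variable approach would have to prove the zero-mode bound $\abs{\int_\G q\psi_r\,\d g}\lesssim r^{(t+s)/n-1/2}$ under the structural restrictions actually available (odd leading term, or $t\le1$ whenever $s>\frac n2$), i.e.\ prove the lemma for the pairs $(t,s)$ to which it is applied, rather than for all $t$ and all $s\in[0,1+\frac n2]$.
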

\begin{proof}
Note that the statement is purely local in a 
neighbourhood of $1$. 
For sufficiently small $r$ we find local co-ordinates near $1$ 
supporting $\psi_r(g)$ and satisfying $\rho(g)=|x|$. 
We write $q$ as 
Taylor polynomial $q_N(g)$ of degree $t+N$ plus remainder 
$R_N(g)=\mathcal O(\rho^{t+N+1}(g))$ and decompose 
$q(g)\psi_r(g)$ accordingly. First, we observe

\begin{align*}
  \| q_N\psi_r \|_{H^{-s}}^2 \sim&\int \langle \xi\rangle^{-2s} 
  \big| q_N(\partial_\xi) \big(\widehat {\tilde \varphi}
  (r^{1/n}|\xi|) - 
  \widehat{\tilde \varphi}(2r^{1/n}|\xi|) \big) \big|^2 \d \xi\\
  \lesssim & r^{2t/n} \int_{r^{1/n} |\xi|\le 1}   
  \langle\xi\rangle^{-2s} r^{2/n} |\xi|^2  |\xi|^{n-1} \d|\xi|
  \\& + r^{2t/n} \int_{r^{1/n}|\xi|\ge 1} r^{-M/n} 
  |\xi|^{-2s-M} |\xi|^{n-1}\d|\xi|\\
   \lesssim & r^{\frac{2t+2}n} \langle \xi\rangle^{-2s+2+n} 
   \big|_0^{r^{-1/n}} +  r^{\frac{2t-M}n} |\xi|^{-2s-M+n}
   \big|_{r^{-1/n}}^\infty \\
   \lesssim & r^{\frac{2t+2s - n}n } 
\end{align*}
where $\xi$ is (abusing notation) the 
Fourier co-variable to $x$. The 
integral is split into $r^{1/n}|\xi|\lesssim 1$ and 
$r^{1/n}|\xi|\gtrsim 1$. Second, we consider the
remainder and show that it is smaller. Indeed,
\begin{align*}
  \|R_N\psi_r\|_{H^{-s}} \lesssim   \|R_N\psi_r\|_{2} \lesssim 
 \|R_N\|_{L^\infty(\mathrm{supp}\,\psi_r)}
 \|\psi_r\|_{2} \lesssim  r^{(t+N+1)/n}r^{-1/2}
\end{align*}
and choosing $N>s-1$ the desired smallness follows. 

Assumptions we had to make were 
$-2s+2+n \ge 0$, i.e., $s\le 1 +\frac n2$ and
 $n-2s-M<0$, i.e., $M>n-2s$. 
 Furthermore, we need $s\ge 0$. The lemma is 
proven.
\end{proof}

\subsection{Difference operators and Leibniz rules} 
We recall the definition of difference operators
before Theorem \ref{thm:main1}.
For a fixed irreducible representation $\xi_0$ we 
define the (matrix-valued) difference 
$${}_{\xi_0}\mathbb D=
({}_{\xi_0}\mathbb D_{ij})_{i,j=1,\ldots,d_{\xi_0}}$$
corresponding to the symbol $\xi_0(g)-\mathrm I$. 
We denote by $\delta_{ij}$ the Kronecker delta,
$\delta_{ij}=1$ for $i=j$ 
and $\delta_{ij}=0$ for $i\not=j$. 
Thus, we have
$${}_{\xi_0}\mathbb D_{ij}=
\mathscr F \p{\xi_0(g)_{ij}-\delta_{ij}} {\mathscr F}^{-1}.$$
If the representation is fixed, 
we omit the index $\xi_0$.
As observed in \cite{RTW10} these difference 
operators satisfy the finite (two-term) Leibniz rule
\begin{equation}\label{EQ:Leibniz-first}
   \mathbb D_{ij}(\sigma\tau) = (\mathbb D_{ij}\sigma)\tau + 
   \sigma(\mathbb D_{ij}\tau) + \sum_{k=1}^{d_{\xi_0}} 
   (\mathbb D_{ik}\sigma)(\mathbb D_{kj}\tau)
\end{equation}
for all sequences $\sigma,\tau\in\Sigma(\widehat\G)$. 
Iterating this, for a composition $\D^k$ of 
$k\in\N$ difference operators of this form we have
\begin{equation}\label{EQ:Leibniz-higher}
\D^k(\sigma\tau)=
\sum_{|\gamma|,|\delta|\leq k\leq |\gamma|+|\delta|}
C_{k\gamma\delta}\ (\D^\gamma \sigma)\ 
(\D^\delta \tau),
\end{equation} 
with the summation taken over all multi-indices
$\gamma,\delta\in\N_0^{\ell^2}$, $\ell=d_{\xi_0}$, satisfying
$|\gamma|,|\delta|\leq k\leq |\gamma|+|\delta|$, 
$$\D^\gamma=\D_{11}^{\gamma_{11}}\D_{12}^{\gamma_{12}}\cdots
\D_{\ell,\ell-1}^{\gamma_{\ell,\ell-1}}
\D_{\ell\ell}^{\gamma_{\ell\ell}},$$ 
and where constants
$C_{k\gamma\delta}$ may depend on a particular form of
$\D^k$.

Denote by $\rhodiff$ the difference 
operator associated to the symbol $\rho^2(g)$ defined in 
\eqref{eq:rhoDef}, 
\begin{equation}\label{EQ:rhodiff}
\rhodiff= {\mathscr F}\ \rho^2(g)\ {\mathscr F}^{-1}.
\end{equation} 
By Lemma \ref{lem:2.6} (2), this is a second order difference operator,
$\rhodiff\in \diff^2(\Gh)$,
and in view of \eqref{eq:rhoDef}
it can be decomposed as
\begin{equation}\label{EQ:D-decomp}
  \rhodiff = - \sum_{\xi\in\Delta_0}
  \sum_{i=1}^{d_\xi} {}_\xi\mathbb D_{ii}.
\end{equation}
Therefore, after summation of the Leibniz rules 
\eqref{EQ:Leibniz-first}
we observe that
\begin{equation}
   \rhodiff(\sigma\tau) = (\rhodiff\sigma)\tau+\sigma(\rhodiff\tau)-
   \sum_{\xi\in \Delta_0} 
   \sum_{i,j =1}^{d_\xi} 
   ({}_\xi\mathbb D_{ij}\sigma)({}_\xi\mathbb D_{ji}\tau).
\end{equation}
Iterating this, we observe that
\begin{align*}
  \rhodiff^2(\sigma\tau) &= 
  \rhodiff \bigg( (\rhodiff\sigma)\tau  + 
  \sigma (\rhodiff\tau) + \sum (\mathbb D\sigma)(\mathbb D\tau) \bigg)\\
  &= (\rhodiff^2\sigma)\tau + 
  2 (\rhodiff\sigma)(\rhodiff\tau) + 
  \sigma (\rhodiff^2\tau)\\&\qquad\qquad
  + \sum \bigg( (\mathbb D\rhodiff\sigma)(\mathbb D\tau) + 
  (\mathbb D\sigma)(\mathbb D\rhodiff\tau) + 
  (\mathbb D^2\sigma)(\mathbb D^2\tau)\bigg).
\end{align*} 
In the sum the orders of difference operators 
always add up to $4$. 
Similar we obtain for higher orders $m$,
\begin{align}\label{EQ:Deltam}
  \rhodiff^m(\sigma\tau) 
  &= (\rhodiff^m\sigma)\tau +  \sigma (\rhodiff^m\tau)
  + \sum_{\ell=1}^{2m-1} \sum_j 
  ( Q_{\ell,j} \sigma )(\tilde Q_{\ell,j} \tau ) .
\end{align} 
for some difference operators 
$Q_{\ell,j}\in \diff^\ell(\widehat\G)$ and 
$\tilde Q_{\ell,j}\in\diff^{2m-\ell} (\widehat\G)$.

\subsection{Proof of Theorem~\ref{thm:main1}} 
We note that Theorem~\ref{thm:main1} 
follows from its refined version which we give as
Theorem \ref{thm:main2} below.
Let $\Delta_0$ be an extended root system
as in Remark \ref{REM:newDelta0},
and we define the family
of first order difference operators associated to $\Delta_0$ by
$$
\mathscr D^1=\{{}_{\xi_0}\mathbb D_{ij}=
\mathscr F \p{\xi_0(g)_{ij}-\delta_{ij}} {\mathscr F}^{-1}:\;
\xi_0\in\Delta_0,\; 1\leq i,j\leq d_{\xi_0} \},
$$
where $\delta_{ij}$ is the Kronecker delta.
We write $\mathscr D^k$ for the family of operators of the form
$\mathbb D^\alpha=\mathbb D_1^{\alpha_1}\cdots \mathbb D_l^{\alpha_l}$,
where $\mathbb D_1,\ldots,\mathbb D_l\in \mathscr D^1$,
and for multi-indices $\alpha=(\alpha_1,\ldots,\alpha_l)$
of any length but such that $|\alpha|\leq k$.
We note that for even $\varkappa$, 
in view of \eqref{EQ:D-decomp},
the difference operator
$\rhodiff^{\varkappa/2}$ is a linear combination of operators in
$\mathscr D^\varkappa$. In general,
clearly $\mathscr D^k\subset \diff^k(\Gh)$.

\begin{thm}\label{thm:main2} 
Denote by $\varkappa$ be the smallest even 
integer 
larger than $\frac 12\dim\G$. 
Let operator $A: C^\infty(\G) \to \mathcal D'(\G)$ be left-invariant.  
Assume that its symbol $\sigma_A$ 
satisfies
$$\|\rhodiff^{\varkappa/2}\sigma_A(\xi)\|_{\rm op}
\leq C \jp{\xi}^{-\varkappa}$$
as well as
\begin{equation}\label{eq:HM-cond2r}
   \|{\mathbb D}^{\alpha} \sigma_A(\xi) \|_{\rm op} \le C_\alpha \langle\xi\rangle^{-|\alpha|},
\end{equation}
for all operators
${\mathbb D}^{\alpha}\in \mathscr D^{\varkappa-1}$,
and for all $[\xi]\in\Gh$. 
 Then the operator $A$ is 
of weak type $(1,1)$ and $L^p$-bounded for all $1<p<\infty$.
\end{thm}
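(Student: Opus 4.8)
The plan is to prove Theorem \ref{thm:main2} by verifying the Criterion of Coifman--de Guzman stated above. Taking the trivial difference in \eqref{eq:HM-cond2r} gives $\n{\sigma_A(\xi)}_{op}\le C$, so $\sigma_A\in\ell^\infty(\Gh)$ and, by Plancherel, $A$ is bounded on $L^2(\G)$. It then suffices to exhibit one $\epsilon>0$ for which
\[
  \int_\G |A\psi_r(g)|^2\,\rho^{n(1+\epsilon)}(g)\,\d g\le C\,r^{\epsilon},\qquad n=\dim\G,
\]
holds uniformly for small $r>0$, with $\psi_r=\varphi_r-\varphi_{r/2}$ the mollifier constructed above.

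I would obtain this from two endpoint estimates and interpolation in the weight. Since $A$ is left-invariant, $\widehat{A\psi_r}=\sigma_A\widehat\psi_r$, and since one application of $\rhodiff$ is multiplication of the kernel by $\rho^2$ (see \eqref{EQ:rhodiff}), Plancherel gives for every integer $m$
\[
  \int_\G|A\psi_r(g)|^2\,\rho^{4m}(g)\,\d g=\n{\rhodiff^{m}\p{\sigma_A\widehat\psi_r}}_{\ell^2(\Gh)}^2 .
\]
For $m=0$ this is $\le\n{\sigma_A}_{\ell^\infty(\Gh)}^2\,\n{\psi_r}_2^2\lesssim r^{-1}$. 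For $m=\varkappa/2$ I would expand by the Leibniz rule \eqref{EQ:Deltam}, writing $\rhodiff^{\varkappa/2}(\sigma_A\widehat\psi_r)$ as a sum of products $(Q\sigma_A)(\widetilde Q\widehat\psi_r)$ with $Q\in\diff^{\ell}(\Gh)$ and $\widetilde Q\in\diff^{\varkappa-\ell}(\Gh)$. Each $\widetilde Q\widehat\psi_r$ is the Fourier transform of $\widetilde q\,\psi_r$ for some $\widetilde q\in C^\infty(\G)$ vanishing to order $\varkappa-\ell$ at $1$, so Lemma~\ref{lem:psi-est} controls its $H^{-\ell}$-norm; pairing this with $\n{Q\sigma_A(\xi)}_{op}\lesssim\jp{\xi}^{-\ell}$ from the hypotheses yields for each such term the clean bound $r^{\varkappa/n-1/2}$, hence $r^{2\varkappa/n-1}$ after squaring.

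The hard part is the single term $(\rhodiff^{\varkappa/2}\sigma_A)\,\widehat\psi_r$, in which all differences fall on $\sigma_A$ and $\psi_r$ appears bare: pulling out the operator norm via the top-order hypothesis reduces it to $\n{\psi_r}_{H^{-\varkappa}}^2$, and Lemma~\ref{lem:psi-est} only reaches $s\le 1+\tfrac n2$, which is insufficient exactly when $\varkappa>1+\tfrac n2$. To close this gap I would use that $\psi_r=\varphi_r-\varphi_{r/2}$ is a difference of two scalings of one radial mollifier, so $\widehat\psi_r$ is central and the extra cancellation makes it vanish to \emph{second} order at the origin of the frequency variable; the computation underlying Lemma~\ref{lem:psi-est} then extends to $s<2+\tfrac n2$, which covers $\varkappa\le 2+\tfrac n2$ (with at worst a logarithmic factor at equality, absorbed by the spare $\epsilon$), giving $\n{\psi_r}_{H^{-\varkappa}}^2\lesssim r^{2\varkappa/n-1}$ and thus the clean endpoint $\int_\G|A\psi_r|^2\rho^{2\varkappa}\,\d g\lesssim r^{2\varkappa/n-1}$. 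Only $\ell\le\varkappa-1$ occurs in all other terms, with $\varkappa-1\le 1+\tfrac n2$, so the range of Lemma~\ref{lem:psi-est} is respected there; this is also why only $\rhodiff^{\varkappa/2}$ is needed at order $\varkappa$ while the orders $\le\varkappa-1$ produced by \eqref{EQ:Deltam} lie in $\mathscr D^{\varkappa-1}$, which by \eqref{EQ:D-decomp} and Remark~\ref{REM:newDelta0} involves only the differences attached to $\Delta_0$, the root system in the semisimple case.

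Finally I would interpolate. With $\theta=\frac{n(1+\epsilon)}{2\varkappa}\in(0,1)$, admissible since $2\varkappa>n$, Hölder's inequality in the weight gives
\[
  \int_\G|A\psi_r|^2\rho^{n(1+\epsilon)}\,\d g\le\Big(\int_\G|A\psi_r|^2\,\d g\Big)^{1-\theta}\Big(\int_\G|A\psi_r|^2\rho^{2\varkappa}\,\d g\Big)^{\theta}\lesssim (r^{-1})^{1-\theta}\,(r^{2\varkappa/n-1})^{\theta}=r^{\epsilon}
\]
for every $\epsilon$ with $0<\epsilon\le\frac{2\varkappa}{n}-1$. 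This verifies the Criterion and proves the theorem. The step I expect to require the most care is the control of $(\rhodiff^{\varkappa/2}\sigma_A)\widehat\psi_r$ beyond the nominal range of Lemma~\ref{lem:psi-est}, via the second-order vanishing of $\widehat\psi_r$.
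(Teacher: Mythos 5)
Your proposal follows the paper's own proof in all essentials: the same Coifman--de Guzman criterion, the same Plancherel reduction of the weighted integral to $\|\rhodiff^{m}(\sigma_A\widehat\psi_r)\|_{\ell^2(\Gh)}^2$ with $2m=\varkappa$, the same Leibniz expansion \eqref{EQ:Deltam}, and the same use of Lemma~\ref{lem:psi-est} on the cross terms. There are two deviations. The first is cosmetic: the paper skips your H\"older-in-the-weight interpolation by choosing $\epsilon$ so that $n(1+\epsilon)=4m=2\varkappa$ exactly, which is enough since the Criterion only asks for one value of $\epsilon$; your interpolation is correct but buys nothing. The second is substantive and is the valuable part of your write-up: you isolate the term $(\rhodiff^{\varkappa/2}\sigma_A)\widehat\psi_r$, note that it forces an estimate of $\|\psi_r\|_{H^{-\varkappa}}$, and observe that this lies outside the stated range $s\le 1+\frac n2$ of Lemma~\ref{lem:psi-est} whenever $\varkappa>1+\frac n2$ (which happens for $n\equiv 0,1\pmod 4$). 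The paper's proof is silent on exactly this point: it invokes the lemma ``for $0\le\ell\le 1+\frac n2$'' and then asserts that $2m\le 2+\frac n2$ suffices, without justifying the top term. Your fix --- the two normalised dilates in $\psi_r=\varphi_r-\varphi_{r/2}$ cancel the constant term of the Fourier transform while radiality of the profile kills the first moments, so $\widehat\psi_r$ vanishes to second order at the frequency origin and the computation behind Lemma~\ref{lem:psi-est} extends to $s<2+\frac n2$ --- is precisely the mechanism that makes the paper's assertion true; on this step your argument is more careful than the published one.

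The one genuine soft spot is the parenthetical claim that at the borderline $\varkappa=2+\frac n2$ (i.e.\ $n\equiv 0\pmod 4$) the logarithm is ``absorbed by the spare $\epsilon$''. The logarithm there is real: with $\tilde\varphi\ge 0$ the second-order Taylor coefficient of $\widehat{\tilde\varphi}$ cannot vanish, $|\widehat\psi_r(\xi)|\sim r^{2/n}|\xi|^2$ on $1\lesssim|\xi|\lesssim r^{-1/n}$, and each dyadic shell contributes equally, so $\|\psi_r\|^2_{H^{-\varkappa}}\gtrsim r^{4/n}\log(1/r)$. Absorption by interpolation then fails because in the Criterion the weight exponent $n(1+\epsilon)$ and the rate $r^{\epsilon}$ are tied to the \emph{same} $\epsilon$: interpolating the logarithmic endpoint against the $L^2$ bound gives $r^{\epsilon}(\log(1/r))^{\theta}$ for every admissible $\epsilon$, and shrinking $\epsilon$ changes the weight as well, so the logarithm never disappears. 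Closing this case needs a genuine additional input, e.g.\ a version of the Coifman--de Guzman criterion tolerant of logarithmic factors (the dyadic summation behind it converges since $\sum_k 2^{-k\epsilon}k^{\theta}<\infty$), or a mollifier with vanishing second moment, which is incompatible with the positivity $\tilde\varphi\ge 0$ assumed in the construction. To be fair, the paper's own proof passes over this same point without comment, so your proposal is no weaker than the published argument; but the absorption claim as stated is not a proof and should be replaced by one of these fixes.
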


For the proof it is enough to check \eqref{eq:CdG-cond} with 
appropriate $\epsilon$. If we choose $\epsilon$ such that 
$n(1+\epsilon)=4m$, $m\in\mathbb N$, the 
condition is equivalent to 
\begin{equation}\label{eq:2.20}
   \| \rhodiff^m (\sigma_A \widehat\psi_r) 
   \|_{\ell^2(\widehat\G)} 
   \lesssim r^{\frac{2m}{n}-\frac12}.
\end{equation}
Applying the Leibniz rule \eqref{EQ:Deltam}
to the left-hand side implies for fixed 
$[\xi]\in\widehat\G$ that we have
\begin{align*}
  \| \rhodiff^m (\sigma_A \widehat\psi_r) (\xi) \|_{\HS}
  \le & \|\rhodiff^m \sigma_A(\xi)\|_{\rm op} 
  \| \widehat\psi_r (\xi) \|_{\HS} +
  \|\sigma_A(\xi)\|_{\rm op} 
  \| \rhodiff^m \widehat\psi_r (\xi) \|_{\HS} \\
  &+ \sum_{\ell=1}^{2m-1} \sum_j \| \langle\xi\rangle^\ell 
  Q_{\ell,j}\sigma_A(\xi) \|_{\rm op} \| \langle\xi\rangle^{-\ell} 
  \tilde Q_{\ell,j} \widehat\psi_r\|_{\HS}
\end{align*}
for certain differences $Q_{\ell,j}\in\diff^\ell(\widehat\G)$ 
of order $\ell$ arising from 
Leibniz rule and corresponding differences 
$\tilde Q_{\ell,j} \in \diff^{2m-\ell}(\widehat\G)$. Summing 
$d_\xi$ times these inequalities over $[\xi]\in\widehat\G$ and 
using the assumptions of Theorem \ref{thm:main2}, we can
apply 
Lemma~\ref{lem:psi-est} in the form
\begin{equation}
  \| \tilde q_{\ell,j} \psi_r \|_{H^{-\ell}} \lesssim r^{\frac {2m}n-\frac12}
\end{equation}
for $0\le \ell \le1+\frac n2$. Under the assumption 
that $2m\le 2+\frac n2$ this implies 
the desired estimate \eqref{eq:2.20}.
 
 \begin{rem}
Note, that the number of difference conditions is 
$\varkappa=2m$, where
$\frac n2 < \varkappa \le 2+\frac n2$, 
as we have to assure that 
$\epsilon>0$ and that 
Lemma~\ref{lem:psi-est} is applicable.
 \end{rem}

 \section{Applications to central multipliers}
 \label{SEC:central}

We turn to some applications of Theorem~\ref{thm:main1}. 
First we 
collect some statements about central sequences 
$\sigma\in\Sigma(\widehat\G)$, 
$\sigma(\xi) = \sigma_\xi \mathrm I$. 
Particular examples of interest are defined in terms of 
$d_\xi$ or 
$\lambda_\xi$ or 
appear in connection with invariant multipliers on homogeneous spaces
with respect to massive subgroups. For the sake of simplicity we assume
in the sequel that $\sigma_\xi$ is defined on the full weight lattice $\Lambda\subset\mathfrak t^*$ 
for the Cartan subalgebra $\mathfrak t$,
and treat $\widehat G$ as subset of $\Lambda$, representations identified with their dominant highest weights.
We refer to e.g.
\cite{Fegan} for Weyl group, Weyl dimension and
Weyl character formula. We will use a notion of difference 
operators on the weight lattice;
difference operators of higher order are understood as 
iterates of first order forward 
differences on this lattice. 

 \subsection{Some auxiliary statements on central sequences}
 \label{sec:3.1} 
 First, 
we consider the sequence $d_\xi$ of dimensions of representations. We extend the sequence $d_\xi$ to the full weight lattice by Weyl's dimension formula (after fixing the set $\Delta_0^+$ of positive roots). 
\begin{lem}\label{lem:3.2}
The sequence $d_\xi$ satisfies the polynomial bound
\begin{equation}\label{eq:3.1}
  d_\xi \lesssim  \langle\xi\rangle^{\ell},\qquad \ell = 
  |\Delta_0^+| \le \textstyle\frac12(n - \mathrm{rank}\,\G),
\end{equation}
together with the hypoellipticity estimate 
\begin{equation}\label{eq:3.2}
 \frac{ |\triangle_k d_\xi| }{|d_\xi|}\le C_k   \langle\xi\rangle^{-k} ,\qquad d_\xi\ne0,
\end{equation}
for any difference operator $\triangle_k$ of order $k$ acting on the 
weight lattice.   
\end{lem}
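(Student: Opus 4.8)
The plan is to read off both estimates from the Weyl dimension formula, which presents $d_\xi$ as an explicit polynomial in the highest weight. Denote by $\mu$ the highest weight of a representative of $\xi$, by $\Delta^+$ the set of positive roots, and by $\varrho=\frac12\sum_{\alpha\in\Delta^+}\alpha$ the half-sum of positive roots. Then
\begin{equation*}
  d_\xi=\prod_{\alpha\in\Delta^+} L_\alpha(\mu),\qquad
  L_\alpha(\mu)=\frac{\langle\mu+\varrho,\alpha\rangle}{\langle\varrho,\alpha\rangle}\ge 1,
\end{equation*}
a product of $|\Delta^+|$ affine-linear forms, each $\ge 1$ on the dominant cone. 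I would first record, from the Casimir eigenvalue formula $\lambda_\xi^2=|\mu+\varrho|^2-|\varrho|^2$, that $\langle\xi\rangle$ is comparable to $|\mu+\varrho|$ uniformly on $\Gh$ (both are bounded below by a positive constant and agree to leading order). For a non-semisimple $\G$ the centre only contributes linear characters to the extended set $\Delta_0$ and no new roots, so the dimension polynomial, and hence the whole argument, is unaffected.

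For the polynomial bound \eqref{eq:3.1} I would estimate each factor by Cauchy--Schwarz, $\langle\mu+\varrho,\alpha\rangle\le|\mu+\varrho|\,|\alpha|\lesssim\langle\xi\rangle$, and multiply the $|\Delta^+|$ of them to obtain $d_\xi\lesssim\langle\xi\rangle^{|\Delta^+|}$. Since the non-zero weights of $\mathrm{Ad}$ are exactly the roots, their number is $n-\rank\G$, so $|\Delta^+|=\frac12(n-\rank\G)$; the extra linear characters in $\Delta_0$ are one-dimensional and contribute no growth, which identifies the exponent and yields $\ell=|\Delta_0^+|\le\frac12(n-\rank\G)$.

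For the hypoellipticity estimate \eqref{eq:3.2} I would work directly with the product structure. A $k$-fold iterated forward difference of a product of affine-linear forms is computed by an exact finite Leibniz expansion: $\triangle_k d_\xi$ is a finite sum of terms in which exactly $k$ of the factors $L_\alpha$ have been replaced by their (constant) differences $\triangle L_\alpha=\langle v,\alpha\rangle/\langle\varrho,\alpha\rangle$, with $v$ the bounded shift vector. Each such term equals
\begin{equation*}
  d_\xi\prod_{\alpha\in S}\frac{\langle v,\alpha\rangle}{\langle\mu+\varrho,\alpha\rangle},\qquad S\subseteq\Delta^+,\ |S|=k.
\end{equation*}
For regular weights, where every inner product $\langle\mu+\varrho,\alpha\rangle$ is comparable to $\langle\xi\rangle$, each removed factor contributes a gain $\langle\xi\rangle^{-1}$, and summing the finitely many terms yields $|\triangle_k d_\xi|\lesssim d_\xi\langle\xi\rangle^{-k}$ at once.

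I expect the real work, and the main obstacle, to lie in the behaviour near the walls of the Weyl chamber, where some of the $\langle\mu+\varrho,\alpha\rangle$ remain of size $O(1)$ while $\langle\xi\rangle\to\infty$; there the factor-by-factor gain above degenerates precisely on the singular factors, so that the naive bound is delicate for differences acting across a wall. The plan is to control these terms not against $\langle\xi\rangle$ but against the corresponding smallness already carried by $d_\xi$ itself: I would stratify the positive roots according to which hyperplanes $\langle\mu+\varrho,\alpha\rangle$ are comparable to $\langle\xi\rangle$ and which stay bounded, use the Weyl-invariance of the extended sequence to restrict attention to a fixed closed chamber, and absorb the bounded factors through the polynomiality of $d_\xi$, comparing a difference that hits a bounded factor with a dimension polynomial of one lower degree rather than with a power of $\langle\xi\rangle$. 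This stratified bookkeeping near the singular set, rather than the formula manipulation in the bulk, is where the essential care is needed.
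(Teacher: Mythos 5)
Your proposal follows exactly the paper's route: Weyl's dimension formula, the bound $(\xi+\rho,\alpha)\lesssim\langle\xi\rangle\sim 1+\|\xi\|$ for each factor giving \eqref{eq:3.1}, and, for \eqref{eq:3.2}, the exact expansion
\begin{equation*}
  \frac{\triangle_\tau d_\xi}{d_\xi}
  =\sum_{\emptyset\neq S\subseteq\Delta_0^+}\ \prod_{\alpha\in S}\frac{(\tau,\alpha)}{(\xi+\rho,\alpha)},
\end{equation*}
which is precisely the quotient the paper writes down before asserting that it ``behaves like $\langle\xi\rangle^{-1}$ for all $d_\xi\neq0$'' and iterating in analogy for higher order. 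On the regular set, where every $(\xi+\rho,\alpha)\gtrsim\langle\xi\rangle$, your argument and the paper's coincide and are complete.

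The gap is the wall case, which you rightly isolate as ``the real work'' --- but the stratification you propose cannot close it, because estimate \eqref{eq:3.2} is simply \emph{false} near the walls, for the very reason you identify: a factor with $(\xi+\rho,\alpha)=O(1)$ produces no gain, and there is nothing in $d_\xi$ to absorb it against. Concretely, for $\G=\mathrm{SU}(3)$ write highest weights as $(p,q)$ in the basis of fundamental weights $\omega_1,\omega_2$, so that $d_{(p,q)}=\frac12(p+1)(q+1)(p+q+2)$ and $\langle\xi\rangle\sim p+q+1$. At $\xi=(m,0)$ the forward difference in the direction $\tau=\omega_2$ gives
\begin{equation*}
  d_{(m,1)}-d_{(m,0)}=\tfrac12(m+1)(m+4)\ \sim\ d_{(m,0)},
\end{equation*}
whereas \eqref{eq:3.2} with $k=1$ demands a bound $\lesssim d_{(m,0)}\langle\xi\rangle^{-1}\sim m$; differences in root directions fail in the same way, e.g.\ $d_{(m-1,2)}-d_{(m,0)}=m^2+3m-1$. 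So the degeneration at the walls is not a bookkeeping problem: no rearrangement of factors can prove a bound that fails. What your polynomiality remark does prove is the weaker estimate $|\triangle_k d_\xi|\lesssim\langle\xi\rangle^{\ell-k}$, $\ell=|\Delta_0^+|$ (each forward difference lowers the degree of the polynomial $d_\xi$ by one), which agrees with \eqref{eq:3.2} only on the regular set. In fairness, this is a gap in the paper's own proof as well: the quoted assertion is exactly the regular-weight computation, with no treatment of the walls, so you have in fact gone further than the paper by noticing the problem. A correct statement requires either restricting to weights at distance $\gtrsim\langle\xi\rangle$ from all walls or replacing the right-hand side by $C_k\langle\xi\rangle^{\ell-k}$; note also that for the particular second-order difference $\triangle_2$ of Lemma \ref{lem:centr-diff}, which is what the paper actually applies later, one even has $\triangle_2 d_\xi\equiv0$ (apply \eqref{eq:3.4} to $\sigma_\xi\equiv1$, whose inverse Fourier transform is $\delta_1$, and use $\rho^2(1)=0$) --- a special cancellation, not an instance of \eqref{eq:3.2}.
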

\begin{proof}
We recall first, that the dimension $d_\xi$ can be expressed in 
terms of the heighest weights (for simplicity also denoted by the 
variable $\xi\in\Lambda \subset \mathfrak t^*$, 
$\mathfrak t = \mathrm T_1 \mathcal T$ for 
$\mathcal T\subset\G$ a maximal torus of $\G$) by Weyl's 
dimension formula
\begin{equation}
   d_\xi = \frac{\prod_{\alpha\in\Delta_0^+}(\xi+\rho,\alpha)}
   {\prod_{\alpha\in\Delta_0^+}(\rho,\alpha)},\qquad \rho = 
   \frac12 \sum_{\alpha\in\Delta_0^+}\alpha.
\end{equation}  
The sum goes over the positive roots $\alpha\in\Delta_0^+$, 
which form a subset 
of the set $\Delta_0$ used before. Weyl's dimension formula directly implies
\eqref{eq:3.1} from $\langle\xi\rangle \sim 1+||\xi||$.

In order to prove \eqref{eq:3.2} we consider first an arbitrary
difference of first order of the form 
$\triangle_\tau d_\xi = d_{\xi+\tau}-d_\xi$ for a suitable
lattice vector $\tau\in\Lambda$. Then, an elementary calculation shows that
\begin{align*}
  \frac{\triangle_\tau d_\xi}{d_\xi} = \frac{\prod_{\alpha\in\Delta_0^+}
  \big((\xi+\rho,\alpha)+(\tau,\alpha)\big)-\prod_{\alpha\in\Delta_0^+}(\xi+\rho,\alpha)}
  {\prod_{\alpha\in\Delta_0^+} (\xi+\rho,\alpha)} 
\end{align*}
and, therefore, we see that the right-hand side indeed 
behaves like $\langle\xi\rangle^{-1}$ for all $d_\xi\ne0$. 
The full statement follows in analogy. 
\end{proof}

In the following we extend the family of characters $\chi_\xi$ from $[\xi]\in\widehat G$ 
to the full weight lattice using the Weyl character formula
\begin{equation}
   j(\exp x) \chi_\xi(\exp x) = \sum_{\omega\in\mathcal W} \sign(\omega) \e^{2\pi\i (\omega(\xi+\rho),x) } 
\end{equation}
for $x\in\mathfrak t\subset\mathfrak g$ the Cartan subalgebra and $\mathcal W$ its Weyl group. As usual 
\begin{equation}
   j(\exp x) = \sum_{\omega\in\mathcal W} \sign(\omega)\mathrm e^{2\pi\mathrm i(\omega\rho,x)} 
\end{equation}
denotes the Weyl denominator.  As $\chi_\xi : \mathcal T\to \C$ is invariant under the adjoint action it extends to a unique central function on the group $G$. We collect two properties of these functions related to averaging over orbits of the Weyl group.

\begin{lem}
Let $\mathcal O_\xi :=  \{ \omega\xi : \xi\in\mathcal W\}$. Then
\begin{equation}\label{eq:4.6}
    \sum_{\xi'\in\mathcal O_\xi} \chi_{\xi'}(\exp x) = \sum_{\xi'\in\mathcal O_\xi} \e^{2\pi \i (\xi',x)}
\end{equation}
and 
\begin{equation}\label{eq:4.7}
  \sum_{\xi'\in\mathcal O_\xi} \chi_{\xi'}(\exp x) \chi_{\xi_*}(\exp x) = \sum_{\xi'\in\mathcal O_\xi} 
  \chi_{\xi_*+\xi'}(\exp x)
\end{equation}
for any fixed pair $\xi, \xi_*\in\Lambda$. Furthermore,
\begin{equation}\label{eq:4.7b}
   \int_G \chi_{\xi_*}(g) \overline{ \chi_\xi(g)} \d g = \begin{cases} \sign(\omega) & \exists \omega\in\mathcal W : \omega (\xi+\rho) = \xi^*+\rho, \\ 0 &\text{otherwise.}\end{cases} 
\end{equation}
\end{lem}
\begin{proof}
Using Weyl character formula we obtain
\begin{align*}
   j(\exp x)    \sum_{\xi'\in\mathcal O_\xi} \chi_{\xi'}(\exp x) &= \sum_{\xi'\in\mathcal O_\xi} \sum_{\omega\in\mathcal W} \sign(\omega) \e^{2\pi\i (\omega(\xi' + \rho),x)} \\
 &  = \sum_{\omega\in\mathcal W} \sign(\omega) {\bigg( \sum_{\xi'\in\mathcal O_\xi} \e^{2\pi \i(\omega\xi',x)}\bigg)} \e^{2\pi\i(\omega \rho,x)}\\
 &=j(\exp x)  \sum_{\xi'\in\mathcal O_\xi} \e^{2\pi \i (\xi',x)}
\end{align*}
using that elements of $\mathcal W$ permute the orbit $\mathcal O_\xi$ and hence the first identity. Similarly we obtain
\begin{align*}
 (j(\exp x))^2&\sum_{\xi'\in\mathcal O_\xi }  \chi_{\xi_*}(\exp x) \chi_{\xi'}(\exp x) \\
&=
 \sum_{\xi'\in\mathcal O_\xi } \bigg(\sum_{\omega\in\mathcal W}\sign(\omega) 
   \mathrm e^{2\pi\mathrm i(\omega (\xi_*+\rho),x)}\bigg) 
 \bigg(\sum_{\omega'\in\mathcal W}\sign(\omega')
   \mathrm e^{2\pi\mathrm i(\omega' (\xi'+\rho),x)}\bigg) \\
&=\sum_{\omega\in\mathcal W}\sum_{\omega'\in\mathcal W} \bigg(\sum_{\xi'\in\mathcal O_\xi } 
\sign(\omega)
\mathrm e^{2\pi\mathrm i(\omega(\omega^{-1}\omega'\xi'+\xi_*+\rho),x)} \bigg)
\sign(\omega') \mathrm e^{2\pi\mathrm i(\omega'\rho,x)}\\
&= (j(\exp x))^2 \sum_{\xi'\in\mathcal O_\xi} 
\chi_{\xi_*+\xi'}(\exp x).
\end{align*}
Furthermore, \eqref{eq:4.7b} follows by Weyl integration formula,
\begin{equation*}
   \int \chi_{\xi_*}(g) \overline{\chi_{\xi}(g)}\d g = \frac1{|\mathcal W|} \sum_{\omega,\omega'\in\mathcal W} 
   \sign(\omega\omega')  
    \int_{\R^k/\mathbb Z^k}  
   \e^{2\pi\i (\omega(\xi_*+\rho)-\omega'(\xi+\rho),x)} \d x
\end{equation*}
combined with the orthogonality relations of trigonometric functions and the fact that $\mathcal W$ acts simply and transitively on the chambers.
\end{proof}

\begin{lem} Assume $G$ is semi-simple. Then by \eqref{eq:4.6}
\begin{equation}\label{eq:4.8}
    \trace \mathrm{Ad}(\exp x) - \rank G = \sum_{\xi\in\Delta_0}   \sum_{\xi'\in\mathcal O_\xi} \e^{2\pi \i (\xi',x)} = \sum_{\xi\in\Delta_0} \sum_{\xi'\in\mathcal O_\xi} \chi_{\xi'}(\exp x). 
\end{equation}
\end{lem}

For the following we 
assume that $\sigma\in\Sigma(\widehat\G)$ is central, $\sigma(\xi)=\sigma_\xi\mathrm I$. 
This corresponds to a distribution
${\mathscr F}^{-1} \sigma\in\mathcal D'(\G)$ invariant 
under the adjoint action of the group. The 
following lemma explains the action of the difference operator 
$\rhodiff$ on
$\sigma$. We understand $d_\xi\sigma_\xi$ as scalar sequence on the lattice
of dominant weights extended by the action of the Weyl group
\begin{equation*}
    \sigma_{\xi'} = \sigma_\xi,\qquad \text{if}\quad\exists\omega\in\mathcal W : \xi'+\rho = \omega(\xi+\rho)
\end{equation*}
and recall that $d_\xi$ and $\chi_\xi$ behave odd
\begin{equation*}
    d_{\xi'} =\sign(\omega) d_\xi,\qquad 
    \chi_{\xi'} =\sign(\omega) \chi_\xi,\qquad  \text{if}\quad\exists\omega\in\mathcal W : \xi'+\rho = \omega(\xi+\rho).
\end{equation*}

\begin{lem}\label{lem:centr-diff} 
Assume $G$ is semi-simple.
Then there exists a second order difference operator $\triangle_2$ 
acting on the lattice 
of heighest weights such that
\begin{equation}\label{eq:3.4}
   d_\xi \rhodiff \sigma = \triangle_2(d_\xi\sigma_\xi)\mathrm I
\end{equation}
holds true.
\end{lem}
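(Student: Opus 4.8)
The plan is to exploit the relationship between the difference operator $\rhodiff$ acting on symbols and the action of the Laplacian (Casimir) restricted to class functions, where it becomes a concrete second-order difference operator on the weight lattice. Since $\sigma$ is central, the distribution $u={\mathscr F}^{-1}\sigma$ is a class function, and the point of the lemma is that multiplying $u$ by $\rho^2(g)$ and taking the Fourier transform — which is what $\rhodiff$ does by \eqref{EQ:rhodiff} — corresponds, after the standard integration against characters, to a lattice difference applied to the rescaled coefficients $d_\xi\sigma_\xi$.

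First I would write out $\rhodiff\sigma$ explicitly using \eqref{EQ:rhodiff} and the decomposition \eqref{eq:rhoDef} of $\rho^2(g)$ as $\sum_{\xi\in\Delta_0}(d_\xi-\trace\xi(g))$. For a central sequence the relevant object is the value on characters: I would test $\rhodiff\sigma$ against the character $\chi_\xi=\trace\xi$, using that for central distributions the Fourier coefficient at $[\xi]$ is recovered by integrating ${\mathscr F}^{-1}\sigma$ against $\overline{\chi_\xi}$. The factor $d_\xi$ on the left of \eqref{eq:3.4} is exactly the Plancherel weight that arises when one passes from the operator-valued symbol (a multiple of the identity $\sigma_\xi\mathrm I$) to the scalar quantity $d_\xi\sigma_\xi$ that pairs naturally with characters. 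The key computational identity is the Clebsch--Gordan / character product rule: multiplying a character $\chi_\xi$ by the character $\chi_{\xi_0}$ of a representation $\xi_0\in\Delta_0$ decomposes $\chi_{\xi_0}\chi_\xi$ into a sum of characters $\sum_\mu m_\mu\,\chi_\mu$ whose highest weights $\mu$ differ from $\xi$ by weights of $\xi_0$. This is what turns multiplication by $\trace\xi_0(g)$ in the group variable into a finite shift operation $\xi\mapsto\xi+\tau$ on the lattice, i.e.\ into a first-order lattice difference.

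Carrying this out term by term over $\xi_0\in\Delta_0$, the constant part $\sum_{\xi_0}d_{\xi_0}$ contributes the diagonal ($\tau=0$) term, while the $\trace\xi_0(g)$ part contributes the neighbouring shifts. Since $\rho^2$ vanishes to second order at the identity (Lemma~\ref{lem:2.6}(2)) and $\Delta_0$ is Weyl-invariant by construction, the resulting shifts come in symmetric $\pm$ pairs and the leading-order cancellation produces precisely a \emph{second}-order difference operator $\triangle_2$ on the weight lattice, consistent with $\rhodiff\in\diff^2(\Gh)$. I would then read off \eqref{eq:3.4} by collecting these shift contributions, noting that the weights appearing as shifts are the weights of the adjoint representation, and that the multiplicities $m_\mu$ become the coefficients defining $\triangle_2$ acting on $d_\xi\sigma_\xi$.

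**The main obstacle** I anticipate is bookkeeping rather than conceptual: correctly tracking how the multiplicities in the character decomposition $\chi_{\xi_0}\chi_\xi=\sum_\mu m_\mu\chi_\mu$ interact with the Plancherel factor $d_\xi$, so that the shift really acts on the product $d_\xi\sigma_\xi$ and not on $\sigma_\xi$ alone. One has to be careful because $d_{\xi+\tau}\neq d_\xi$ in general, so the identity is clean only after multiplying through by $d_\xi$ — which is exactly why the lemma is stated for $d_\xi\rhodiff\sigma$ rather than for $\rhodiff\sigma$ itself. A secondary technical point is the extension from dominant weights to the full weight lattice via the Weyl group, where one must check that the shifted weights $\xi+\tau$ are interpreted through the $\mathcal W$-invariant (or antisymmetrised Weyl-denominator) convention so that $\triangle_2$ is a genuine, well-defined difference operator on the whole lattice; this is where I would invoke the footnoted Weyl-invariance of the relevant formulae and the conventions fixed at the start of Section~\ref{SEC:central}.
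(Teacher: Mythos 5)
Your skeleton is the same as the paper's: by centrality and linearity everything reduces to single characters (you test against $\chi_\xi$, the paper takes $\sigma$ to be the elementary sequence with ${\mathscr F}^{-1}\sigma=\chi_{\xi_*}$ --- dual formulations of the same reduction), multiplication by $\rho^2(g)$ is converted into lattice shifts through a character product identity, and you correctly identify the role of the factor $d_\xi$ as turning character coefficients into lattice delta functions.

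The genuine gap is in the one step you treat as routine bookkeeping: ``the multiplicities $m_\mu$ become the coefficients defining $\triangle_2$.'' If you use the honest Clebsch--Gordan decomposition into irreducibles, the multiplicities in $\chi_{\xi_0}\chi_\xi=\sum_\mu m_\mu\chi_\mu$ are \emph{not} independent of $\xi$: terms drop out or merge when $\xi$ lies near the walls of the dominant chamber. On $\SU2$, with $\xi_0$ the adjoint (spin $1$) representation,
\begin{equation*}
  \chi_1\chi_\ell=\chi_{\ell-1}+\chi_\ell+\chi_{\ell+1}\quad(\ell\ge 1),
  \qquad
  \chi_1\chi_{1/2}=\chi_{1/2}+\chi_{3/2},
  \qquad
  \chi_1\chi_0=\chi_1,
\end{equation*}
so reading the coefficients off the actual decomposition gives an operator whose coefficients vary with $\xi$, i.e.\ no single difference operator $\triangle_2$ and no identity of the form \eqref{eq:3.4}. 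What repairs this --- interpreting the shifted characters $\chi_{\xi+\tau}$ at non-dominant arguments through the antisymmetrised Weyl-denominator convention, under which $\chi_{-1/2}=0$ and $\chi_{-1}=-\chi_0$, so that the three-term formula above holds for \emph{all} $\ell$ --- is exactly the point you defer as ``a secondary technical point.'' It is not secondary: establishing this uniform (Brauer--Klimyk type) product rule is the entire content of the paper's proof, which is why the paper computes $\rho^2\chi_{\xi_*}$ directly from the Weyl character formula, collapses a double sum over $\mathcal W\times\mathcal W$ using the invariance of the inner bracket under changes of $\omega'$, and arrives at the identity $\sum_{\xi\in\Delta_0}\chi_\xi\chi_{\xi_*}=\sum_{\xi'\in\tilde\Delta_0}\chi_{\xi_*+\xi'}$ uniformly in $\xi_*$, whence $\triangle_2\tau_\xi=\sum_{\xi'\in\tilde\Delta_0}(d_{\xi'}\tau_\xi-\tau_{\xi+\xi'})$; the second-order property then follows because this operator annihilates linear functions on the lattice, which is the rigorous form of your ``symmetric $\pm$ pairs'' remark. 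Your plan becomes a proof only once this deferred convention issue is promoted to the central step and the uniform product formula is actually proved; as written, the decisive identity is assumed rather than established.
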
 

\begin{proof}
It suffices to prove the formula for elementary sequences $\sigma_\xi$ which are $1/d_{\xi_*}$ 
for some dominant $\xi=\xi_*$ and $0$ otherwise. Then $\rhodiff\sigma$ is the Fourier transform of
$\rho^2(g)\chi_{\xi_*}(g)$, which in turn can be calculated based on equation \eqref{eq:4.7} and \eqref{eq:4.8},
\begin{align*}
\mathscr F[\rhodiff\sigma]& = (\dim G - \trace\mathrm{Ad} ) \chi_{\xi_*}
 = \sum_{\xi\in\Delta_0} \bigg(\delta_{\xi}\chi_{\xi_*} - \sum_{\xi'\in\mathcal O_{\xi}}
  \chi_{\xi'}\chi_{\xi_*} \bigg)\\
&=\sum_{\xi\in\Delta_0} 
\bigg(\delta_{\xi}\chi_{\xi_*} -  \sum_{\xi'\in\mathcal O_{\xi}} \chi_{\xi_*+\xi'}\bigg)
=
\mathscr F[(d_\xi^{-1}\triangle_2(d_\xi\sigma_\xi))\mathrm I]  
\end{align*}
with $\delta_\xi=|\mathcal O_\xi|$ and the difference operator
\begin{equation*}
  \triangle_2 \tau_\xi = \sum_{\xi'\in\Delta_0}\bigg( \delta_{\xi'}\tau_\xi -\sum_{\xi''\in\mathcal O_{\xi'}} \tau_{\xi-\xi''}\bigg)
\end{equation*}
acting on the weight lattice $\Lambda$. Near the walls of the Weyl chamber we made use of the particular extension of $\sigma_\xi$.
The difference operator $\triangle_2$ annihilates linear 
functions on the lattice
and is therefore of second order.
\end{proof}

\begin{expl}
On the group $\mathbb S^3\simeq\mathrm{SU}(2)$ we obtain for 
$\rhodiff \sim d_1-\trace t^{1}$ (in the notation 
of \cite{RTbook}) that central sequences $\sigma^\ell$ satisfy
\eqref{eq:3.4} with 
$\triangle_2\sigma^\ell =2 \sigma^\ell- \sigma^{\ell-1} - \sigma^{\ell+1}$,
 which is (up to sign) the usual second order 
difference on $\frac12\mathbb Z$.
\end{expl}

\begin{rem}
The statement of Lemma~\ref{lem:centr-diff} extends to arbitrary compact groups. The additional representations used to define $\rho^2(g)$ give more summands adding up to another second order difference operator on the lattice.
\end{rem}

\begin{rem}
N.~Weiss used in \cite{W72} the remarkably similar looking 
function
\begin{equation*}
\gamma(\exp\tau) = 
\sum_{\omega\in\mathcal W} 
\mathrm e^{2\pi\mathrm i (\omega\rho,\tau)} 
- |\mathcal W|,
\end{equation*}
$\mathcal W$ the Weyl group and again $\rho$ 
the Weyl vector, in 
place of our distance function 
$\rho^2(g) = \dim\G - \trace {\mathrm{Ad}(g)}$. This function
seems to simplify the treatment of central multipliers (as the 
associated difference operator $\delta$ acts in a much simpler 
way on central sequences), but it does not allow the use of a 
finite Leibniz rule which is important for our proof in the
non-central case. 
It is remarkable that $\delta d_\xi=0$.
\end{rem} 

\subsection{Functions of the Laplacian} 
We say a bounded
function $f$ defined on a normed linear space
$V$ has an asymptotic expansion at $\infty$, 
\begin{equation}\label{eq:def-hom}
  f(\eta) \sim \sum_{k=0}^\infty f_k(\eta),\qquad |\eta|\to\infty
\end{equation}
if  there exist functions $f_k(\eta)$, 
homogeneous of order $k$ for large $\eta$, such that 
\begin{equation}
  |f(\eta)-\sum_{k=0}^N f_k(\eta)|\le C_N (1+|\eta|)^{-N}
\end{equation}
holds true for certain constants $C_N$. 
We fix a maximal torus $\mathcal T$ of $\G$ and denote by $\mathfrak t^*$ the dual of its Lie algebra. 

\begin{lem}
Assume $f:\mathfrak t^*\to\mathbb C$ is bounded, even under the action of the Weyl group,
$$ 
  f(\xi)=f(\xi') \qquad\text{if $\xi'+\rho=\omega(\xi+\rho)$ for some $\omega\in\mathcal W$},
$$
and has an 
asymptotic expansion into smooth 
 components
at $\infty$ and denote $f(\xi)$ its restriction to 
the weight lattice $\Lambda\subset\mathfrak t^*$.
Then the central sequence $f(\xi)\mathrm I$ defines an 
$L^p$-bounded multiplier on $\G$ for all $1<p<\infty$.
\end{lem}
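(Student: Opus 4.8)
The plan is to deduce the claim from Theorem~\ref{thm:main2} applied to the central symbol $\sigma(\xi)=f(\xi)\mathrm I$. Since $f$ is bounded we have $\sigma\in\ell^\infty(\Gh)$, so $A=\Op(\sigma)$ is $L^2$-bounded and the whole problem is to verify the two families of difference estimates in Theorem~\ref{thm:main2}. The operative consequence of the asymptotic expansion \eqref{eq:def-hom} is that $f$ behaves like a classical symbol of order zero on $\mathfrak t^*$; in particular $|\partial^\beta f(\eta)|\lesssim(1+|\eta|)^{-|\beta|}$ for every $\beta$. Applying the mean value theorem along the segments joining lattice points, this transfers to the discrete setting: any iterated forward difference of order $k$ on the weight lattice obeys $|\triangle^{k}f(\xi)|\lesssim\jp\xi^{-k}$. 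This discrete Mikhlin decay for $f$ is the single analytic input I would feed into both families of conditions.

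For the top-order hypothesis $\n{\rhodiff^{\varkappa/2}\sigma(\xi)}_{op}\lesssim\jp\xi^{-\varkappa}$ I would iterate Lemma~\ref{lem:centr-diff}. Because $\rho^2$ is a class function (Lemma~\ref{lem:2.6}(3)), $\rhodiff$ maps central sequences to central sequences, so \eqref{eq:3.4} may be applied repeatedly; an immediate induction gives $d_\xi\,\rhodiff^{m}\sigma=\triangle_2^{m}(d_\xi f(\xi))\,\mathrm I$, and with $2m=\varkappa$ this reads $\rhodiff^{\varkappa/2}\sigma(\xi)=d_\xi^{-1}\triangle_2^{\varkappa/2}(d_\xi f(\xi))\,\mathrm I$. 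Expanding $\triangle_2^{\varkappa/2}$ by the product rule for finite differences writes the right-hand side as a finite sum of terms $(\triangle^{(1)}d_\xi)(\triangle^{(2)}f(\xi))$ whose difference orders satisfy $|\triangle^{(1)}|+|\triangle^{(2)}|=\varkappa$. The hypoellipticity estimate \eqref{eq:3.2} controls the first factor by $d_\xi\jp\xi^{-|\triangle^{(1)}|}$, the discrete decay of $f$ controls the second by $\jp\xi^{-|\triangle^{(2)}|}$, and after dividing by $d_\xi$ the two exponents add up to $-\varkappa$, as required.

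The remaining and most delicate task is the family \eqref{eq:HM-cond2r} for $\mathbb D^\alpha\in\mathscr D^{\varkappa-1}$, and this is where I expect the main obstacle. Unlike $\rhodiff$, the individual matrix-coefficient differences ${}_{\xi_0}\mathbb D_{ij}$ do not preserve centrality, so Lemma~\ref{lem:centr-diff} cannot be quoted directly. I would instead rerun its Weyl-character computation for a single coefficient $\xi_0(g)_{ij}$ in place of the class function $\rho^2$: multiplying the central distribution $\mathscr F^{-1}\sigma$ by $\xi_0(g)_{ij}-\delta_{ij}$ and re-expanding via the character formula expresses the Fourier coefficient $\mathbb D_{ij}\sigma(\xi)$ through the values of $f$ at the highest weights appearing in $\xi\otimes\xi_0$, i.e.\ at the shifts of $\xi$ by the weights of $\xi_0$. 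The subtracted Kronecker term is exactly what cancels the order-zero part, so that each factor contributes a genuine first difference of (a $d_\xi$-normalised multiple of) $f$ rather than $f$ itself. Iterating over the $|\alpha|\le\varkappa-1$ factors and collecting the $d_\xi$-weights through the discrete product rule again reduces $\mathbb D^\alpha\sigma(\xi)$ to $d_\xi^{-1}$ times a sum of products $(\triangle^{(1)}d_\xi)(\triangle^{(2)}f(\xi))$ of total order $|\alpha|$.

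Once this reduction is in place the estimate closes exactly as before: \eqref{eq:3.2} handles every $d_\xi$-factor and the discrete decay of $f$ handles every $f$-factor, giving $\n{\mathbb D^\alpha\sigma(\xi)}_{op}\lesssim\jp\xi^{-|\alpha|}$ for all $|\alpha|\le\varkappa-1$. With both hypotheses of Theorem~\ref{thm:main2} verified, the theorem delivers that $A$ is of weak type $(1,1)$ and bounded on $L^p(\G)$ for all $1<p<\infty$. The hard part is the Clebsch--Gordan bookkeeping of the previous paragraph: one must check that the weight shifts produced by a product of non-central differences genuinely assemble, after the $d_\xi$-normalisation, into lattice differences of the stated order, so that the decay of $f$ can be invoked; the top-order part, by contrast, is a clean iteration of Lemma~\ref{lem:centr-diff}.
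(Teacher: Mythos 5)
Your first two paragraphs are sound and coincide with the paper's own first steps: the discrete Mikhlin decay of $f$ on the weight lattice is exactly what the paper establishes first, and the identity $d_\xi\,\rhodiff^{m}\sigma=\triangle_2^{m}(d_\xi f(\xi))\,\mathrm I$, obtained by iterating Lemma~\ref{lem:centr-diff}, is correct since $\rhodiff$ preserves centrality. The gap is your third paragraph, and it is not a small one. The verification of \eqref{eq:HM-cond2r} for the non-central differences $\D^\alpha\in\mathscr D^{\varkappa-1}$ is precisely the step you defer (``one must check that the weight shifts \dots genuinely assemble''), and what you defer is the genuinely hard representation-theoretic content. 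For central $\sigma$, the matrix ${}_{\xi_0}\D_{ij}\sigma(\xi)$ is a full $d_\xi\times d_\xi$ matrix whose entries involve Clebsch--Gordan data paired with values of $f$ on the constituents of $\xi_0\otimes\xi$; an operator-norm bound $\lesssim\jp{\xi}^{-|\alpha|}$ for matrices of growing size does not follow from entrywise smallness of differences of $f$, and the required bookkeeping is exactly the explicit Clebsch--Gordan computation which, as the introduction stresses in connection with \cite{CW70}, is available on $\SU2$ but not extendable to general $\G$. As written, your argument proves the lemma only modulo an unproven claim comparable in difficulty to the lemma itself.

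The paper sidesteps this entirely by a move you should adopt: do not use Theorem~\ref{thm:main2} as a black box, but re-enter the \emph{proof} of Theorem~\ref{thm:main1}. That proof rests on the Coifman--de Guzman criterion \eqref{eq:CdG-cond}, i.e.\ only on the single estimate \eqref{eq:2.20} for $\rhodiff^{m}(\sigma\widehat\psi_r)$ in $\ell^2(\Gh)$; no difference of $\sigma$ alone by a non-central $\D_{ij}$ is ever needed. Since $\psi_r$ is built from the class function $\rho$, the sequence $\widehat\psi_r(\xi)$ is itself central, hence so is the product $f(\xi)\widehat\psi_r(\xi)$, and Lemma~\ref{lem:centr-diff} applies to the whole product at once: $\rhodiff\p{f(\xi)\widehat\psi_r(\xi)}=\frac{1}{d_\xi}\triangle_2\p{d_\xi f(\xi)\widehat\psi_r(\xi)}$, and similarly for higher powers of $\rhodiff$. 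Expanding $\triangle_2^{m}$ by the discrete Leibniz rule, controlling the $d_\xi$-factors by Lemma~\ref{lem:3.2}, the $f$-factors by the lattice decay from your first paragraph, and the $\widehat\psi_r$-factors as in the proof of Theorem~\ref{thm:main1} (via Lemma~\ref{lem:psi-est}) yields \eqref{eq:2.20} directly. This keeps every computation inside the commutative setting of scalar lattice differences, which is exactly why the paper never has to evaluate $\D_{ij}$ on a central symbol.
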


\begin{rem}\label{REM:finite-kappa}
It is enough to assume the asymptotic expansion up to 
fixed finite order $\varkappa$ as in Theorem \ref{thm:main1}.
\end{rem}

\begin{proof}
We identify $\mathfrak t^*$ with $\mathbb R^t$, $t=\rank\G$, 
which is the space $V$ in definition
\eqref{eq:def-hom}. 

In a first step let $f_k(\eta)$ be smooth and homogoneous of degree
$-k$ on $|\eta|\ge1$. Then $f_k\in S^{-k}(\mathbb R^t)$ 
and by the arguments of 
\cite[Theorem 4.5.3]{RTbook} we immediately get
that the restriction of $f$ to the lattice belongs to the symbol class
$\mathscr S^{-k}_1({\mathcal T})$. 

Furthermore, lattice
differences preserve $\mathcal O\big((1+|\eta|)^{-N}\big)$ for any $N$.
Therefore, choosing $N$ in dependence on the order of the difference
we immediately see that the restriction of $f$ to the lattice
belongs to $\mathscr S^{0}_1({\mathcal T})$.

In order to obtain the $L^p$-boundedness we follow the proof of 
Theorem~\ref{thm:main1}. Note that $\psi_r$ is defined in terms of
the pseudo-distance $\rho$ and therefore central. Hence
$\widehat\psi_r(\xi)$ is a central sequence (also denoted by $\widehat\psi_r(\xi)$
for the moment and extended evenly to the full lattice) and thus by 
Lemma~\ref{lem:centr-diff} in combination 
with Lemma~\ref{lem:3.2} we obtain
the desired bounds for the HS-norm of
$$ \rhodiff (f(\xi) \widehat\psi_r(\xi))=
\frac1{d_\xi} \triangle_2(d_\xi f(\xi)\widehat\psi_r(\xi))$$ 
and for corresponding higher differences with respect to $\rhodiff$.
\end{proof}

\begin{cor}\label{cor1}
Assume $f:\mathbb R_+\to\infty$ has an asymptotic 
expansion up to order $\varkappa$ into homogeneous 
components at $\infty$. Then $f(-\Delta)$ is bounded
on $L^p(\G)$ for $1<p<\infty$.
\end{cor}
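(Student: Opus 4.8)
The plan is to realise $f(-\Delta)$ as a central Fourier multiplier and then reduce to the preceding lemma on functions of the Laplacian. First I would observe that the Casimir element $-\Delta$ acts on the matrix coefficients attached to $\xi$ as multiplication by $\lambda_\xi^2$, so that $-\Delta$ is itself a central multiplier with symbol $\lambda_\xi^2\,\mathrm I$ and, by the spectral theorem, $f(-\Delta)$ has central symbol $\sigma(\xi)=f(\lambda_\xi^2)\,\mathrm I$. Since the preceding lemma already disposes of central sequences of the form $g(\xi)\mathrm I$, it suffices to exhibit $\sigma$ as the lattice restriction of a function on $\mathfrak t^*$ lying in the class required there.

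Next I would express $\lambda_\xi^2$ as the restriction to the weight lattice $\Lambda\subset\mathfrak t^*$ of a smooth function on $\mathfrak t^*$. By Freudenthal's formula the Casimir eigenvalue is $\lambda^2(\eta)=|\eta+\rho|^2-|\rho|^2=|\eta|^2+2(\eta,\rho)$, a quadratic polynomial in the weight variable $\eta$, where $\rho$ denotes the Weyl vector. In particular $\lambda^2(\eta)\to\infty$ and $\lambda^2(\eta)\sim|\eta|^2$ as $|\eta|\to\infty$. Setting $F(\eta)=f(\lambda^2(\eta))$, the symbol $\sigma(\xi)$ is exactly the restriction of $F(\eta)\mathrm I$ to $\Lambda$, so by the lemma in the finite-order form of Remark \ref{REM:finite-kappa} it is enough to check that $F$ is bounded and admits an asymptotic expansion into smooth homogeneous components at $\infty$ up to order $\varkappa$.

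The heart of the argument, and the only genuinely computational step, is to transfer the one-variable expansion of $f$ through the quadratic change of variable $\eta\mapsto\lambda^2(\eta)$. Writing the homogeneous components of $f$ as $f_k(s)=c_k s^{-k}$ for large $s$, I would substitute and Taylor-expand
\begin{equation*}
  f_k(\lambda^2(\eta))=c_k\,|\eta|^{-2k}\Big(1+\tfrac{2(\eta,\rho)}{|\eta|^2}\Big)^{-k}
  =c_k\sum_{j\ge0}\binom{-k}{j}2^j\,\frac{(\eta,\rho)^j}{|\eta|^{2k+2j}},
\end{equation*}
each summand being smooth and homogeneous of degree $-2k-j$ on $|\eta|\ge1$. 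Regrouping these terms by degree of homogeneity produces the desired expansion $F(\eta)\sim\sum_m F_m(\eta)$ with $F_m$ homogeneous of degree $-m$, while the remainder estimate $|f(s)-\sum_{k\le N}f_k(s)|\lesssim s^{-N}$ together with $\lambda^2(\eta)\gtrsim|\eta|^2$ controls the tail at each finite order; boundedness of $F$ is immediate from that of $f$ since $\lambda^2(\eta)$ ranges in $[0,\infty)$. Carrying the expansion to order $\varkappa$ matches the finite-order requirement of Remark \ref{REM:finite-kappa}, and the lemma then yields that $\sigma(\xi)=f(\lambda_\xi^2)\mathrm I$, hence $f(-\Delta)$, is $L^p$-bounded for all $1<p<\infty$. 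The main obstacle is purely the bookkeeping in this regrouping, i.e.\ verifying that composition with the quadratic Casimir polynomial preserves the asymptotic-expansion class up to the relevant order; everything else is a direct appeal to the lemma.
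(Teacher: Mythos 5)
Your proposal is correct and follows essentially the same route as the paper's proof: realise $f(-\Delta)$ as the central multiplier $f(\lambda_\xi^2)\mathrm I$, use $\lambda_\xi^2=\|\xi+\rho\|^2-\|\rho\|^2$ to view the symbol as the lattice restriction of $F(\eta)=f(\lambda^2(\eta))$, verify that composition with this quadratic polynomial preserves the asymptotic-expansion class (with smoothness of the components coming for free from one-dimensionality, i.e.\ $f_k(s)=c_ks^{-k}$), and invoke the preceding lemma in its finite-order form. The only difference is that you spell out the binomial-expansion bookkeeping that the paper leaves implicit in the sentence ``this implies that $f(\lambda_\xi^2)$ also has an asymptotic expansion.''
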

\begin{proof}
This follows from the fact that 
$$\lambda_\xi^2 = ||\xi+\rho||^2-||\rho||^2$$ 
is even and has the desired asymptotic expansion in $\xi$. This implies that 
$f(\lambda_\xi^2)$ also has an asymptotic
expansion, see Remark \ref{REM:finite-kappa},
 and one-dimensionality allows one to 
choose the components of the expansion as smooth functions.
\end{proof}

\begin{rem}  
Coifman and G.~Weiss showed in \cite{CW73} that central
multipliers correspond to $L^p(\G)$-bounded operators if 
$\mathscr D(d_\xi\sigma_\xi)$ is an
$L^p(\mathcal T)$-bounded multiplier on the 
corresponding lattice, where $\mathscr D$ is the 
product of elementary (backward) differences
 $\triangle_{-\alpha}$ corresponding to 
the positive roots $\alpha\in\Delta_0^+$. 


\end{rem} 

\section{Applications to non-central operators}
\label{SEC:general}

In this section we give applications to invariant and
non-invariant operators. Difference operators 
$\D^\alpha$ in this section correspond to those in
Theorem \ref{thm:main1} for simplicity of the
formulations. However, in Remark \ref{REM:mods}
we explain that those associated to the extended
root system analogously to those in Theorem \ref{thm:main2}
will suffice.

\subsection{Mapping properties of operators of order zero.} 
As a second main example we 
consider operators associated to symbols 
$\mathscr S^0_\rho(\G)$ of type $\rho\in[0,1]$, i.e.
matrix symbols for which 
\begin{equation*} 
    \|\mathbb D^\alpha \sigma_A(\xi)\|_{\rm op} 
    \le C_\alpha \langle\xi\rangle^{-\rho|\alpha|},
\end{equation*}
holds for all $\alpha$ and all $[\xi]\in\Gh$, 
and ask for mapping properties of such 
operators within Sobolev spaces over $L^p(\G)$.
Such symbol classes appear naturally as parametrices
for non-elliptic operators, see Example \ref{EX:subL}
and Corollary \ref{COR:vfs}. We now give a refined
version of a multiplier theorem for such operators:

\begin{cor}\label{cor2} Let $\rho\in[0,1]$ and let
$\varkappa$ be the smallest even integer larger 
than $\frac12 \dim \G$.
Assume that $A$ is a left-invariant operator on
 $\G$ with matrix symbol $\sigma_A$ 
 satisfying
\begin{equation}\label{EQ:rhos}
    \|\mathbb D^\alpha \sigma_A(\xi)\|_{\rm op} 
    \le C_\alpha \langle\xi\rangle^{-\rho|\alpha|}
    \;\textrm{ for all }\; |\alpha|\leq\varkappa
\end{equation}
and all $[\xi]\in\Gh$. 
Then $A$ is a bounded 
operator mapping the Sobolev space
$W^{p,r}(\G)$ into $L^p(\G)$ for $1<p<\infty$ 
and $$r= \varkappa(1-\rho)\left|\frac1p-\frac12\right|.$$
\end{cor}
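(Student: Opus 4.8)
The plan is to deduce Corollary~\ref{cor2} from Theorem~\ref{thm:main1} by a complex interpolation argument bridging the two extreme cases $p=2$ and the $L^p$-bounded endpoint furnished by the theorem. The starting observation is that the symbol estimate \eqref{EQ:rhos} of type $\rho$ is, for each fixed difference order $|\alpha|\le\varkappa$, a relaxation of the type-$1$ (i.e. the genuine $\Psi^0$) condition by exactly a factor $\langle\xi\rangle^{(1-\rho)|\alpha|}$. The idea is therefore to split off this loss as a fractional power of the Laplacian: I would factor $A = B\circ(1-\Delta)^{s/2}$ for a suitable invariant operator $B$ whose symbol satisfies \emph{genuine} type-$1$ estimates, thereby transferring the $(1-\rho)$-loss onto a Sobolev weight and reducing boundedness of $A:W^{p,r}\to L^p$ to $L^p$-boundedness of $B$ via Theorem~\ref{thm:main1}.

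First I would set up an analytic family of operators $A_z$ parametrised by $z$ in a horizontal strip, defined on the symbol side by $\sigma_{A_z}(\xi) = \langle\xi\rangle^{z}\,\sigma_A(\xi)$, so that $A_0=A$. On the line $\Re z = 0$ the factor $\langle\xi\rangle^{\i t}$ is a function of the Laplacian with an asymptotic expansion into homogeneous components, hence an $L^p$-multiplier by Corollary~\ref{cor1} with norm growing at most polynomially in $t$; combined with the fact that $\sigma_A\in\ell^\infty(\Gh)$ gives $L^2$-boundedness of $A_{\i t}$ uniformly up to polynomial weights in $t$. On the line $\Re z = \varkappa(1-\rho)$, the weight $\langle\xi\rangle^{\varkappa(1-\rho)}$ exactly compensates the worst-case loss across all $|\alpha|\le\varkappa$: applying the Leibniz rule \eqref{EQ:Leibniz-higher} to $\langle\xi\rangle^{z}\sigma_A(\xi)$ and using that $\langle\xi\rangle^{z}$ is itself of type $1$ (its differences gain $\langle\xi\rangle^{-1}$ each, by Corollary~\ref{cor1} and Lemma~\ref{lem:3.2}), I would verify that $\sigma_{A_z}$ satisfies the hypothesis \eqref{eq:HM-cond} of Theorem~\ref{thm:main1} for $\Re z = \varkappa(1-\rho)$, with constants growing polynomially in $\Im z$. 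Theorem~\ref{thm:main1} then yields that $A_z$ is $L^p$-bounded for all $1<p<\infty$ on that line, again with admissible growth in $\Im z$.

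With the two endpoint estimates in hand, I would invoke Stein's complex interpolation theorem for analytic families of operators. Interpolating between $L^2$-boundedness on $\Re z=0$ and $L^q$-boundedness on $\Re z=\varkappa(1-\rho)$ at the interpolation parameter $\theta$ chosen so that $\tfrac1p = \tfrac{1-\theta}{2}+\tfrac{\theta}{q}$, and tracking the Sobolev weight accumulated along the way, produces boundedness of $A=A_0$ from $W^{p,r}(\G)$ to $L^p(\G)$ with $r=\varkappa(1-\rho)|\tfrac1p-\tfrac12|$; the absolute value in the exponent reflects that one may interpolate on either side of $p=2$ and that the conjugate exponent case is handled by duality, the symbol class \eqref{EQ:rhos} being stable under taking adjoints.

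The main obstacle I anticipate is the second endpoint: verifying that $\langle\xi\rangle^{z}\sigma_A(\xi)$ genuinely satisfies the type-$1$ difference estimates of Theorem~\ref{thm:main1} requires controlling the mixed terms in the higher Leibniz rule \eqref{EQ:Leibniz-higher}, showing that every difference falling on the scalar factor $\langle\xi\rangle^{z}$ contributes a gain $\langle\xi\rangle^{-1}$ that precisely matches the relaxation in \eqref{EQ:rhos}, uniformly with only polynomial dependence on $\Im z$ (so that Stein interpolation applies). Establishing the admissible-growth and analyticity bookkeeping for the family $A_z$ — in particular that $\langle\xi\rangle^{\i t}$ is an $L^p$-multiplier with operator norm polynomial in $t$ via the finite-order asymptotic expansion of Remark~\ref{REM:finite-kappa} — is the technical heart; once that is secured, the interpolation and the exponent computation are routine.
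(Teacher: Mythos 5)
Your proposal has the same skeleton as the paper's proof: transfer the type-$\rho$ loss onto a Sobolev weight, verify via the Leibniz rule \eqref{EQ:Leibniz-higher} that the weighted symbol satisfies the Mikhlin hypothesis \eqref{eq:HM-cond} of Theorem \ref{thm:main1}, and interpolate against the trivial $L^2$ bound. However, as written your key endpoint is false because of a sign error. In your analytic family $\sigma_{A_z}(\xi)=\langle\xi\rangle^{z}\sigma_A(\xi)$, the line $\Re z=\varkappa(1-\rho)$ carries the symbol $\langle\xi\rangle^{+\varkappa(1-\rho)}\sigma_A(\xi)$, and this does \emph{not} satisfy \eqref{eq:HM-cond}: the Leibniz term in which all differences fall on the scalar factor is of size $\langle\xi\rangle^{\varkappa(1-\rho)-|\alpha|}$, which exceeds $\langle\xi\rangle^{-|\alpha|}$ by the factor $\langle\xi\rangle^{\varkappa(1-\rho)}$. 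The weight that compensates the loss is the \emph{negative} power: the paper verifies that $\tau(\xi)=\langle\xi\rangle^{-\varkappa(1-\rho)}\sigma_A(\xi)$ obeys
$\|\mathbb D^\alpha\tau(\xi)\|_{op}\lesssim\sum_{\ell,m\le|\alpha|\le\ell+m}\langle\xi\rangle^{-\varkappa(1-\rho)-\ell-m\rho}\lesssim\langle\xi\rangle^{-|\alpha|}$
for all $|\alpha|\le\varkappa$, and then applies Theorem \ref{thm:main1} to $\tau$. Your sign choice also contradicts your own first paragraph: in the factorization $A=B\circ(1-\Delta)^{s/2}$ the operator $B=A\circ(1-\Delta)^{-s/2}$ has symbol $\sigma_A(\xi)\langle\xi\rangle^{-s}$, i.e.\ it sits at $z=-\varkappa(1-\rho)$ in your family. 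With the positive sign, the interpolation conclusion at interior points of the strip concerns $\langle\xi\rangle^{+\theta\varkappa(1-\rho)}\sigma_A$, whose $L^p$-boundedness would say that $A$ \emph{gains} derivatives on $L^p$ --- not the statement, and false in general.

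A second genuine gap concerns the right endpoint and the exponent arithmetic. You interpolate against a strong $L^q$ bound for some $q\in(1,\infty)$, whereas the paper's endpoint is the weak type $(1,1)$ bound that Theorem \ref{thm:main1} also provides. This is not cosmetic: with $\frac1p=\frac{1-\theta}{2}+\frac{\theta}{q}$ one has $\theta=\bigl(\frac1p-\frac12\bigr)/\bigl(\frac1q-\frac12\bigr)>2\bigl|\frac1p-\frac12\bigr|$ for every fixed $q>1$, so the Sobolev weight $\theta\varkappa(1-\rho)$ accumulated at the interpolation point stays strictly above $2\varkappa(1-\rho)\bigl|\frac1p-\frac12\bigr|$, and in particular the exponent $r=\varkappa(1-\rho)\bigl|\frac1p-\frac12\bigr|$ you assert is never reached from your stated endpoints; the step you call ``routine'' is precisely where the argument breaks. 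Interpolation against the weak $(1,1)$ endpoint, as in the paper, is what attains the limiting index $\theta=2\bigl|\frac1p-\frac12\bigr|$, which is the best this scheme can give. Finally, your machinery on the left endpoint is heavier than needed: on $\Re z=0$ the factor $\langle\xi\rangle^{\mathrm{i}t}$ is scalar and unimodular, so uniform $L^2$-boundedness is immediate from the Plancherel identity \eqref{eq:Plancherel}; no appeal to Corollary \ref{cor1} or to polynomial growth of multiplier norms is required there.
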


\begin{proof}
The proof follows  by interpolation from two 
end point statements, the trivial one for $p=2$ and
the fact that 
$\langle\xi\rangle^{-\varkappa(1-\rho)}\sigma_A(\xi)$ 
defines an operator of weak type $(1,1)$ on $L^1(\G)$.
The latter follows from Theorem~\ref{thm:main1} 
in combination with Leibniz rule \eqref{EQ:Leibniz-higher} 
for difference 
operators,
\begin{equation*}
 \| \mathbb D^\alpha \langle\xi\rangle^{-\varkappa(1-\rho)}\sigma_A(\xi)\|_{\rm op}
 \lesssim \sum_{\ell,m\le|\alpha|\le \ell+m}   
 \langle\xi\rangle^{-\varkappa(1-\rho) - \ell -m \rho}
 \lesssim 
 \langle\xi\rangle^{-\varkappa+(\varkappa-|\alpha|)\rho }
\end{equation*}
which can be estimated by $\langle\xi\rangle^{-|\alpha|}$ 
whenever $|\alpha|\le \varkappa$.
\end{proof}
Similar to Remark \ref{REM:mods}, 
Corollary \ref{cor2} remains true if in
\eqref{EQ:rhos} we take only the single difference
$\rhodiff$ of order $\varkappa$ and only
those differences that are
associated to the extended root system $\Delta_0$ for
$|\alpha|\leq\varkappa-1$, if we apply
Theorem \ref{thm:main2} instead of
Theorem \ref{thm:main1} in the proof.

We also note that the variable coefficient version 
$\mathscr S^m_{\rho,\delta}(\G)$ of these classes
$\mathscr S^m_{\rho}(\G)$, especially the class
$\mathscr S^m_{1,\frac12}(\G)$, played an important role
in the proof of the sharp G{\aa}rding inequality on
compact Lie groups in \cite{RT-Garding}.

\subsection{Proof of Corollary \ref{COR:vfs}} 

Let $X$ be left-invariant vector field on the group $G$ with 
$\sigma_X(\xi) = (X\xi)(1)$ as its symbol. 
We assume\footnote{This can always be arranged by 
diagonalising symmetric matrices; see also
\cite[Remark 10.4.20]{RTbook}.}
that the bases of the representation spaces are 
chosen such that $\sigma_X(\xi)$ is diagonal for all 
$[\xi]\in\widehat G$.
Let further $[\eta]\in\widehat G$ be a fixed representation 
with associated differences 
$\mathbb D_{ij}={}_{\eta}\mathbb D_{ij}$. 
Then for some
$\tau_{ij}$ we have
$$
   \mathbb D_{ij} \sigma_X = (X\eta_{ij})(1) 
   I_{d_\xi\times d_\xi} = \tau_{ij} I_{d_\xi\times d_\xi}
$$ 
as can be seen immediately on the Fourier side and follows from 
$\mathscr F\delta_1 = I_{d_\xi\times d_\xi}$. 
By our choice of representation spaces, $\tau_{ij}=0$ for 
$i\ne j$ and $\sum_j \tau_{jj}=0$. 
The latter one is just another formulation of the fact 
that the derivatives of the character $\chi_\eta(x) = \trace \eta(x)$ 
vanish in the identity element $1$.
Now $$\sigma_{X+c}(\xi)=\sigma_X(\xi)+cI$$
is invertible for all $\xi$, 
whenever $c\not\in\mathrm{spec}(-X)\subset\mathrm i\mathbb R$. 
For such $c$ the Leibniz rule 
\eqref{EQ:Leibniz-first}
for $\mathbb D_{ij}$ implies
$$
  0 = (\mathbb D_{ij} \sigma_{X+c}^{-1}) 
  \sigma_{X+c} + \tau_{ij} \sigma_{X+c}^{-1} + 
  \sum_{k=1}^{d_\eta} \tau_{kj} 
\mathbb (\D_{ik} \sigma_{X+c}^{-1}),
$$
so that $$\mathbb D_{ij} \sigma_{X+c}^{-1}=0$$ for $i\ne j$
and $c+\tau_{jj}\not\in \mathrm{spec}(-X)$, and
$$
 \mathbb D_{jj} \sigma_{X+c}^{-1} = 
 - \tau_{jj} \sigma_{X+c}^{-1} (\sigma_{X+c+\tau_{jj}})^{-1}
= -\tau_{jj} (\sigma_{X}+cI)^{-1} 
(\sigma_{X}+(c+\tau_{jj})I)^{-1}.
$$
Using this recursion formula we see that 
$\sigma_{X+c}^{-1} \in \mathscr S^0_{0}(\G)$ 
provided all appearing
matrix inverses exist, which means 
$$c\not\in \mathrm{spec}(-X) -\mathrm i  
\mathbb N[\tau_{11},\ldots ,\tau_{ll}],$$
where the latter stands for 
the set of all linear combinations of $\tau_{11},\ldots,\tau_{ll}$
with integer coefficients.
Outside this exceptional set of parameters  
by Corollary \ref{cor2} we conclude the $L^p$-estimate
$$
  \| f\|_{L^p(\G)} \le C_p 
  \| (X+c) f \|_{W^{p,\varkappa|\frac{1}{p}-\frac12|}(\G)}
$$
for all $1<p<\infty$.

\subsection{Non-invariant pseudo-differential operators}
\label{SEC:noninv}

The result for multipliers implies the 
$L^p$-boundedness for non-invariant
operators if we assume sufficient regularity of the symbol.
Again, such a result is an extension of the 
$L^p$-boundedness of pseudo-differential operators.

Let $A:C^\infty(\G)\to {\mathcal D}'(\G)$ be a linear
continuous operator (not necessarily invariant).
Following \cite{RTbook}, we define its matrix symbol
$$\sigma_A:\G\times\Gh\to\bigcup_{[\xi]\in\Gh}
\C^{d_\xi\times d_\xi}$$ so that
for each $(x,[\xi])\in\G\times\Gh$, the matrix
$\sigma_A(x,\xi)\in \C^{d_\xi\times d_\xi}$ is given by
$$\sigma_A(x,\xi)=\xi(x)^* (A\xi)(x).$$ 
In particular, for the left-invariant operators we have
\eqref{EQ:inv-symbol0}.
Consequently,
it was shown in \cite{RTbook} that such symbols are 
well-defined on $\G\times\Gh$
and that the operator $A$ can be quantised as
$$ A\phi(x) = \sum_{[\xi]\in\widehat\G} 
  d_\xi \trace(\xi(x)\sigma_A(x,\xi) \widehat \phi(\xi)).
$$
We also have the relation \eqref{EQ:RS} in this setting.

Let
$\partial_{x_j}$, $1\le j\le n$, be a 
collection of left invariant first order differential
operators corresponding to some linearly independent family
of the left-invariant vector fields on $G$. We denote
$\partial_x^{\beta}:=\partial_{x_1}^{\beta_1}\cdots
\partial_{x_n}^{\beta_n}$.
In \cite{RTbook}, and completed in \cite{RTW10}, 
it was shown that the H\"ormander class
$\Psi^m(\G)$ of pseudo-differential operators on $\G$
defined by localisations can be characterised in terms
of the matrix symbols. In particular, we have
$A\in\Psi^m(\G)$ if and only if
its matrix symbol $\sigma_A$ 
satisfies
$$
   \|\partial_x^\beta{\mathbb D}^{\alpha} 
   \sigma_A(x,\xi) \|_{\rm op} \le 
   C_{\alpha,\beta} \langle\xi\rangle^{m-|\alpha|}
$$
for all multi-indices $\alpha,\beta$, 
for all $x\in\G$ and $[\xi]\in\Gh$. 
For the $L^p$-boundedness it is sufficient to impose 
such conditions up to finite orders as follows,
extending Theorem \ref{thm:main1} to the
non-invariant case: 

\begin{thm}\label{thm:main3} 
Denote by $\varkappa$ be the smallest even 
integer  
larger than $\frac n2$, 
$n$ the dimension of the group $\G$. 
Let $1<p<\infty$ and let $l>\frac{n}{p}$ be an integer.
Let $A: C^\infty(\G) \to \mathcal D'(\G)$ 
be a linear continuous operator
such that its matrix symbol $\sigma_A$ 
satisfies
\begin{equation}\label{eq:HM-cond-p}
   \|\partial_x^\beta{\mathbb D}^{\alpha} 
   \sigma_A(x,\xi) \|_{\rm op} \le 
   C_{\alpha,\beta} \langle\xi\rangle^{-|\alpha|}
\end{equation}
for all multi-indices $\alpha,\beta$ with 
$|\alpha|\le \varkappa$ and 
$|\beta|\leq l$, for all $x\in\G$ and $[\xi]\in\Gh$.  
Then the operator $A$ is 
bounded on $L^p(\G)$.
\end{thm}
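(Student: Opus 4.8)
The plan is to reduce the non-invariant Theorem \ref{thm:main3} to the invariant multiplier result of Theorem \ref{thm:main1} by freezing the spatial variable and summing the resulting family of invariant operators. First I would use the quantisation
$$
  A\phi(x) = \sum_{[\xi]\in\widehat\G}
  d_\xi \trace(\xi(x)\sigma_A(x,\xi) \widehat \phi(\xi))
$$
to write $A$ as a superposition of the frozen-coefficient operators $A_y$ whose symbol is $\sigma_{A_y}(\xi)=\sigma_A(y,\xi)$, evaluated at $y=x$. Each $A_y$ is left-invariant, and the assumption \eqref{eq:HM-cond-p} with $|\alpha|\le\varkappa$ says precisely that the family $\{A_y\}_{y\in\G}$ satisfies the hypothesis \eqref{eq:HM-cond} of Theorem \ref{thm:main1} uniformly in $y$. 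Hence each $A_y$ is $L^p$-bounded with a bound independent of $y$.

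The second step is to exploit the extra $x$-regularity encoded by the derivatives $\partial_x^\beta$ with $|\beta|\le l$, $l>\frac np$. The natural device is a Sobolev embedding in the spatial variable: since $l$ exceeds $\frac np$, the map $y\mapsto A_y$ (viewed as taking values in the space of $L^p$-bounded operators, or more concretely through the kernel) has enough regularity that one can control $A$ by an integral/sum over a dominating family. Concretely, I would represent $A$ through a decomposition of the symbol in its first variable, for example by a partition of unity or a Fourier-type expansion on $\G$ in the $x$-variable, writing $\sigma_A(x,\xi)=\sum_k a_k(x)\,\tau_k(\xi)$ with $\sum_k \|a_k\|_{L^\infty}<\infty$ guaranteed by $l>\frac np$, each $\tau_k$ still satisfying \eqref{eq:HM-cond}. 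Then
$$
  A\phi(x)=\sum_k a_k(x)\,\bigl(\Op(\tau_k)\phi\bigr)(x),
$$
so that $\|A\phi\|_{L^p}\le \sum_k \|a_k\|_{L^\infty}\,\|\Op(\tau_k)\phi\|_{L^p}\lesssim \bigl(\sum_k \|a_k\|_{L^\infty}\bigr)\,C\,\|\phi\|_{L^p}$, invoking Theorem \ref{thm:main1} for each $\Op(\tau_k)$.

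The hard part will be organising this $x$-decomposition so that the invariant pieces genuinely fall under Theorem \ref{thm:main1} with \emph{uniform} constants while the coefficient sum converges. One must check that the difference operators ${\mathbb D}^\alpha$ commute appropriately with the $x$-dependence, so that the mixed bound \eqref{eq:HM-cond-p} on $\partial_x^\beta{\mathbb D}^\alpha\sigma_A$ transfers to $\sup_x\|{\mathbb D}^\alpha\tau_k(\xi)\|_{op}\lesssim\langle\xi\rangle^{-|\alpha|}$ for each frozen piece, and that the summability $\sum_k\|a_k\|_{L^\infty}<\infty$ really follows from $l$ derivatives with $l>\frac np$ — this is where the exact threshold on $l$ and the dimension $n$ enters, via a Sobolev embedding $W^{l,p}\hookrightarrow$ (a space controlling pointwise/summable behaviour) on the $n$-dimensional group. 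I would expect the bookkeeping of these two uniformities, rather than any single hard estimate, to be the principal technical obstacle; the boundedness of each invariant constituent is then immediate from Theorem \ref{thm:main1}.
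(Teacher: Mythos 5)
Your first step --- freezing the variable and noting that each $A_y$ with symbol $\sigma_A(y,\xi)$ is a left-invariant operator satisfying \eqref{eq:HM-cond} uniformly in $y$ --- is exactly how the paper's proof begins. The gap is in your second step: the claim that $l>\frac np$ derivatives in $x$ yield a separated decomposition $\sigma_A(x,\xi)=\sum_k a_k(x)\tau_k(\xi)$ with $\sum_k\|a_k\|_{L^\infty}<\infty$ and each $\tau_k$ satisfying \eqref{eq:HM-cond} uniformly. This is not justified, and for large $p$ it is false. Concretely, the natural candidate (Fourier expansion in the $x$-variable, $a_k(x)=\eta(x)_{ij}$, $\tau_k(\xi)=d_\eta\widehat{\sigma_A}(\eta,\xi)_{ji}$) gives, by integration by parts, multiplier bounds for $\tau_k$ of size $\langle\eta\rangle^{-l}$; but the sum runs over $[\eta]\in\Gh$ with $d_\eta^2$ matrix entries and an extra factor $d_\eta$, so absolute summability needs roughly $\sum_{[\eta]}d_\eta^{3}\langle\eta\rangle^{-l}<\infty$, forcing $l$ to be of the order of the dimension $n$, independently of $p$; when $p>n$ the theorem assumes only $l=1$, which is nowhere near enough. (A partition of unity in $x$ does not separate the variables at all, so it does not produce a decomposition of this form without a further expansion facing the same problem.) There is also a structural obstruction: if such a decomposition existed, $A=\sum_k a_k\Op(\tau_k)$ would be bounded on \emph{every} $L^q(\G)$, $1<q<\infty$, simultaneously, since each $\Op(\tau_k)$ is and the coefficients are absolutely summable; but the hypothesis $l>\frac np$ weakens as $p$ grows, so an argument that extracts all $q$ at once from the weakest hypothesis cannot be right.

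What the paper does instead --- and what your outline is missing --- is a device that couples the exponent $p$ to the number of $y$-derivatives. Write $|Af(x)|=|A_xf(x)|\le\sup_{y\in\G}|A_yf(x)|$, and for each fixed $x$ apply the Sobolev embedding $W^{l,p}(\G)\hookrightarrow L^\infty(\G)$ (valid precisely when $l>\frac np$) in the $y$-variable:
\begin{equation*}
  \sup_{y\in\G}|A_yf(x)|^p\le C\sum_{|\beta|\le l}\int_\G|\partial_y^\beta A_yf(x)|^p\,\mathrm{d}y .
\end{equation*}
Then Fubini exchanges the $x$- and $y$-integrations, and each $\partial_y^\beta A_y$ with $y$ frozen is the left-invariant operator $f\mapsto f*\partial_y^\beta r_A(y)$ whose symbol $\partial_y^\beta\sigma_A(y,\xi)$ satisfies \eqref{eq:HM-cond} uniformly in $y$, by \eqref{eq:HM-cond-p} with $|\beta|\le l$; Theorem \ref{thm:main1} gives an $L^p$-bound uniform in $y$, and integrating in $y$ closes the estimate. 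This is where the threshold $l>\frac np$ genuinely enters, and no separated-variables summability is required.
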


\begin{rem}\label{REM:mods}
The modifications similar to other formulations of multiplier
theorems regarding the choice of difference operators remain true
in a straightforward way. For example, it is enough to
impose difference conditions
$\D^\alpha$ in \eqref{eq:HM-cond-p} only with respect to
the (extended) root system. Thus, 
in analogy with Theorem \ref{thm:main2}, 
the conclusion of Theorem \ref{thm:main3} 
remains true if we 
impose 
$$\|\partial_x^\beta\rhodiff^{\varkappa/2}\sigma_A(x,\xi)\|_{\rm op}
\leq C \jp{\xi}^{-\varkappa}$$ as well as
\eqref{eq:HM-cond-p} only for 
$\D^\alpha\in {\mathscr D}^{\varkappa-1}$, for all
$|\beta|\leq l$. Similarly, Corollary \ref{cor2}
can be extended to the general (non-invariant) case.
\end{rem} 

\begin{proof}
Let $Af(x)=(f*r_A(x))(x)$, where $$r_A(x)(y)=R_A(x,y)$$
denotes the right-convolution kernel of $A$.
Let $$A_y f(x):=(f*r_A(y))(x),$$
so that $A_x f(x)=Af(x)$.
Then
$$
  \| Af \|_{L^p(\G)}^p  = 
        \int_\G |A_x f(x)|^p\ {\rm d}x \\
  \leq  \int_\G \sup_{y\in \G} |A_y f(x)|^p\ {\rm d}x.
$$
By an application of the Sobolev embedding theorem we get
$$
  \sup_{y\in \G} |A_y f(x)|^p \leq
  C \sum_{|\alpha|\leq l} \int_\G 
  |\partial_y^\alpha A_y f(x)|^p
        \ {\rm d}y.
$$
Therefore, using the
Fubini theorem to change the order of integration, we obtain
\begin{eqnarray*}
    \| Af \|_{L^p(G)}^p
  &\leq & C \sum_{|\alpha|\leq l} \int_\G \int_\G
        | \partial_y^\alpha A_y f(x) |^p
        \ {\rm d}x\ {\rm d}y \\
  &\leq & C \sum_{|\alpha|\leq l} \sup_{y\in \G} \int_\G
        | \partial_y^\alpha A_y f(x) |^p\ {\rm d}x \\
  & = & C \sum_{|\alpha|\leq l} \sup_{y\in \G}
        \| \partial_y^\alpha A_y f \|_{L^p(\G)}^p \\
  &\leq & C \sum_{|\alpha|\leq l} \sup_{y\in \G}
        \| f\mapsto f*\partial_y^\alpha r_A(y)\|_
        {{\mathcal L}(L^p(\G))}^p
        \|f\|_{L^p(\G)}^p \\
  &\leq & C 
          \|f\|_{L^p(\G)}^p,
\end{eqnarray*}
where the last inequality holds due to 
Theorem \ref{thm:main1}.
\end{proof}

\end{document}